\newtheorem{lma}{Lemma}[section]
\newaliascnt{thmCt}{lma}
\newtheorem{thm}[thmCt]{Theorem}
\newaliascnt{corCt}{lma}
\newtheorem{cor}[corCt]{Corollary}
\newaliascnt{prpCt}{lma}
\newtheorem{prp}[prpCt]{Proposition}
\newtheorem*{thm*}{Theorem}
\newtheorem*{cor*}{Corollary}
\newtheorem*{prop*}{Proposition}
\theoremstyle{definition}
\newaliascnt{pgrCt}{lma}
\newtheorem{pgr}[pgrCt]{}
\newaliascnt{dfnCt}{lma}
\newaliascnt{rmkCt}{lma}
\newtheorem{rmk}[rmkCt]{Remark}
\newaliascnt{rmksCt}{lma}
\newaliascnt{exaCt}{lma}
\newtheorem{exa}[exaCt]{Example}
\newaliascnt{qstCt}{lma}
\newtheorem{qst}[qstCt]{Question}
\newaliascnt{pbmCt}{lma}
\newtheorem{pbm}[pbmCt]{Problem}
\newtheorem{thmx}{Theorem}
\def\today{\number\day\space\ifcase\month\or   January\or February\or
   March\or April\or May\or June\or   July\or August\or September\or
   October\or November\or December\fi\   \number\year}
\newcommand{\CC}{{\mathbb{C}}}
\newcommand{\RR}{{\mathbb{R}}}
\newcommand{\Bdd}{{\mathcal{L}}}
\newcommand{\Isom}{{\mathrm{Isom}}}
\newcommand{\id}{{\mathrm{id}}}
\newcommand{\ev}{{\mathrm{ev}}}
\newcommand{\re}{{\mathrm{Re}}}
\newcommand{\ca}{$C^*$-algebra}
\DeclareMathOperator{\Rep}{Rep}
\newcommand{\ltArensProd}{\square}
\newcommand{\rtArensProd}{\lozenge}
\newcommand{\Pelc}{Pe{\l}czy\'{n}ski}
\newcommand{\weakStar}{weak${}^*$}
\title{Extending representations of Banach algebras to their biduals}
\date{\today}
\author[Eusebio Gardella]{Eusebio Gardella}
\address{Eusebio Gardella. Mathematisches Institut, Universit\"at M\"unster, Einsteinstr.~62, 48149 M\"unster, Germany.}
\email{gardella@uni-muenster.de}
\urladdr{http://pages.uoregon.edu/gardella/}
\author{Hannes Thiel}
\address{Hannes Thiel. Mathematisches Institut, Universit\"at M\"unster, Einsteinstr.~62, 48149 M\"unster, Germany.}
\email{hannes.thiel@uni-muenster.de}
\urladdr{www.math.uni-muenster.de/u/hannes.thiel/}
\thanks{Part of this research was conducted while the authors were taking part in the Research Program \emph{Classification of operator algebras, complexity, rigidity and dynamics}, held at the Institut Mittag-Leffler, between January and April of 2016.
The first named author was partially supported by a Postdoctoral Research Fellowship from the Humboldt Foundation.
The authors were partially supported by the Deutsche Forschungsgemeinschaft (SFB 878 Groups, Geometry \& Actions).}
\subjclass[2010]{
Primary:
46H15,   
Secondary:
46E30,  
46L05,  
47L10.  
}
\keywords{}
\begin{document}

\begin{abstract}
We show that a representation of a Banach algebra $A$ on a Banach space $X$ can be extended to a canonical representation of $A^{**}$ on $X$ if and only if certain orbit maps $A\to X$ are weakly compact.
When this is the case, we show that the essential space of the representation is complemented if $A$ has a bounded left approximate identity.
This provides a tool to disregard the difference between degenerate and nondegenerate representations.

Our results have interesting consequences both in $C^*$-algebras and in abstract harmonic analysis.
For example, a \ca{} $A$ has an isometric representation on an $L^p$-space, for $p\in[1,\infty)\setminus\{2\}$, if and only if $A$ is commutative.
Moreover, the $L^p$-operator algebra of a locally compact group is universal with respect to arbitrary representations on $L^p$-spaces.
\end{abstract}

\maketitle

\section{Introduction}

Let $A$ be a Banach algebra.
By the groundbreaking work of Arens, multiplication on $A$ can be extended in two natural ways to a multiplication on its bidual $A^{**}$, called the left Arens product $\ltArensProd$ and the right Arens product $\rtArensProd$.
Each of these multiplications give $A^{**}$ the structure of a Banach algebra, and the map $\kappa_A\colon A\to A^{\ast\ast}$ is multiplicative.
Despite being complicated objects, biduals of Banach algebras are usually accessible through powerful tools that are not
at one's disposal to study the Banach algebras themselves: among others, arguments involving polar decompositions can be performed,
and a much more general type of functional calculus is available.
The technical advantages enjoyed by biduals are regularly exploited in the theory of $C^*$-algebras, where Kaplansky's density theorem often allows one to transfer information back to $A$.

It therefore becomes relevant to know in what situations one can work in the much more flexible context of the bidual algebra.
More concretely, we are interested in knowing when a representation of $A$ extends to $A^{\ast\ast}$.

\begin{pbm}
\label{pbm:extendRepr}
Given a representation $\varphi\colon A\to\Bdd(X)$, find conditions that guarantee that there exists a representation
$\tilde{\varphi}\colon (A^{**},\rtArensProd)\to\Bdd(X)$, or $\tilde{\varphi}\colon (A^{**},\ltArensProd)\to\Bdd(X)$, making the following diagram commute:
\[
\xymatrix{
A^{**} \ar[dr]^{\tilde{\varphi}} \\
A \ar@{^{(}->}[u]^{\kappa_A} \ar[r]_-{\varphi}
& \Bdd(X)
.}
\]
\end{pbm}

As stated, this problem is very general and a complete answer seems out of reach.
However, in \autoref{sec:extRepr} we obtain a complete answer if we require the extension to be the `canonical' one, in the following sense.
In \cite{GarThi16arX:PredualsBXY}, we introduced a natural multiplication on $\Bdd(X,X^{**})$ and
a multiplicative operator $\alpha_X\colon(\Bdd(X)^{**},\rtArensProd)\to\Bdd(X,X^{**})$;
see \autoref{pgr:extReprXXdd} for details.

The composition $\alpha_X\circ\varphi^{**}\colon A^{**}\to \Bdd(X,X^{**})$ is a canonical candidate for a solution to \autoref{pbm:extendRepr}; the only
condition that may fail is that its range is not always contained in $\Bdd(X)$.
Besides being natural, this map has the remarkable feature that, whenever it is a solution to the extension problem, then it is in fact multiplicative with respect to \emph{both} Arens products.
Other particular aspects of this canonical solution are exploited in \autoref{sec:projEssSp}; see also Theorem~\ref{thmx:compl} below.

In the main result of this work, we characterize when a canonical extension exists:

\begin{thmx}\label{thmx:ext}
Let $\varphi\colon A\to\Bdd(X)$ be a representation.
Then \autoref{pbm:extendRepr} has a canonical solution (for either Arens product, or both)
if and only if the orbit map $\mathrm{ev}_x\circ\varphi\colon A\to X$ is weakly compact for all $x\in X$.
\end{thmx}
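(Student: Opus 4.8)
The plan is to use the canonical candidate map $\alpha_X \circ \varphi^{**} \colon A^{**} \to \Bdd(X, X^{**})$ described in the excerpt (with details deferred to \autoref{pgr:extReprXXdd}), and to show that its range lands in $\Bdd(X) \subseteq \Bdd(X, X^{**})$ precisely when all orbit maps are weakly compact. The key observation is that for a fixed $x \in X$, the composition with evaluation at $x$ gives a map $A^{**} \to X^{**}$, and landing in $\Bdd(X)$ means exactly that this composite factors through $X \subseteq X^{**}$.

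**The main argument.**

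First I would analyze, for fixed $x \in X$, the orbit map $\psi_x := \ev_x \circ \varphi \colon A \to X$, a bounded linear map. Its bidual $\psi_x^{**} \colon A^{**} \to X^{**}$ fits into a commutative diagram with $\varphi^{**}$ and the evaluation structure defining $\alpha_X$; I expect that $\ev_x \circ (\alpha_X \circ \varphi^{**})$ agrees with $\psi_x^{**}$ on $A^{**}$ (this compatibility is where the explicit construction of $\alpha_X$ from \cite{GarThi16arX:PredualsBXY} enters — it should be essentially a diagram chase, using that $\alpha_X$ is built precisely so that evaluation at $x$ of $\alpha_X(T)$ recovers $T$ applied appropriately). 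Granting this, the range of $\alpha_X \circ \varphi^{**}$ is contained in $\Bdd(X)$ if and only if $\psi_x^{**}(A^{**}) \subseteq \kappa_X(X)$ for every $x \in X$. Now I invoke the classical Gantmacher-type characterization of weak compactness: a bounded operator $T \colon A \to X$ is weakly compact if and only if $T^{**}(A^{**}) \subseteq \kappa_X(X)$. Applying this to $T = \psi_x$ for each $x$ gives the equivalence. The forward direction (canonical solution exists $\Rightarrow$ orbit maps weakly compact) follows by reversing this: if $\alpha_X \circ \varphi^{**}$ has range in $\Bdd(X)$ and the diagram commutes, then $\ev_x \circ (\alpha_X \circ \varphi^{**}) = \psi_x^{**}$ has range in $\kappa_X(X)$, hence each $\psi_x$ is weakly compact.

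**Closing off the multiplicativity claim.**

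Finally I would address the parenthetical "(for either Arens product, or both)": once we know the range condition holds, we must verify $\alpha_X \circ \varphi^{**}$ is actually a representation, i.e.\ multiplicative and making the triangle commute. Commutativity of the triangle is immediate since $\alpha_X \circ \varphi^{**} \circ \kappa_A = \alpha_X \circ \kappa_{\Bdd(X)} \circ \varphi$ and $\alpha_X \circ \kappa_{\Bdd(X)}$ is the canonical inclusion $\Bdd(X) \hookrightarrow \Bdd(X, X^{**})$. For multiplicativity, I would cite the property already stated in the excerpt — that $\alpha_X$ is multiplicative $(\Bdd(X)^{**}, \rtArensProd) \to \Bdd(X, X^{**})$, and that $\varphi^{**}$ intertwines the Arens products on $A^{**}$ and $\Bdd(X)^{**}$ — so $\alpha_X \circ \varphi^{**}$ is multiplicative for $\rtArensProd$; the symmetric statement for $\ltArensProd$, and hence for both simultaneously, should follow from the remarked feature that once this map solves the extension problem it is automatically multiplicative for both products (this is presumably proved in \autoref{pgr:extReprXXdd} or nearby, so I would simply reference it).

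**Anticipated obstacle.**

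The technical heart is the identification $\ev_x \circ \alpha_X \circ \varphi^{**} = (\ev_x \circ \varphi)^{**}$, which requires unwinding the definition of $\alpha_X$ and the multiplication on $\Bdd(X, X^{**})$ from the companion paper; everything else is either the standard weak-compactness criterion or a citation to properties already announced in the introduction.
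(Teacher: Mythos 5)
Your proposal follows essentially the same route as the paper: the key identity $\ev_x\circ\tilde\varphi = (\ev_x\circ\varphi)^{**}$ (which the paper obtains from \autoref{prp:charAlpha}, i.e.\ Lemma~3.18 of \cite{GarThi16arX:PredualsBXY}), combined with the Gantmacher-type criterion that an operator is weakly compact if and only if its bitranspose has range in the original codomain, and the multiplicativity statements imported from the companion paper. The argument is correct and matches the paper's proof of \autoref{prp:charExtRepr}.
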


The easiest situation to apply \autoref{prp:charExtRepr} is when \emph{every} operator $A\to X$ is weakly compact.
In this case, \emph{every} representation of $A$ on $X$ has a canonical extension to a representation of $A^{**}$ (endowed with either Arens product) on $X$.
There are many interesting situations in which every operator $A\to X$ is weakly compact:
If $A$ or $X$ is reflexive;
or if $A$ is a \ca{} and $X$ does not contain an isomorphic copy of $c_0$;
see \autoref{pgr:extWCpct}.

If $\varphi\colon A\to\Bdd(X)$ is a representation, we define its \emph{essential space} $X_\varphi$ as the closed linear span of $\{\varphi(a)x\colon a\in A, x\in X\}$, and
we say that $\varphi$ is nondegenerate if $X_\varphi=X$.
Nondegeneracy is a technically important condition, which grants one access to extension arguments involving multiplier algebras.
For a locally compact group $G$, nondegeneracy is precisely the assumption needed to be able to `disintegrate' a representation $L^1(G)\to \Bdd(X)$ to an action of $G$ on $X$.

In practice, however, one often encounters nondegenerate representation.
The difference is relatively insignificant for representations on Hilbert spaces, since one can simply restrict the attention to the essential space (which is again a Hilbert space).
For a more general Banach space $X$, the distinction is more important.
It is particularly valuable to know when $X_{\varphi}$ is complemented in $X$, since many properties of Banach spaces pass to complemented subspaces.
In \autoref{sec:projEssSp}, we use Theorem~\ref{thmx:ext} to show that this is often the case:

\begin{thmx}\label{thmx:compl}
Let $A$ be a Banach algebra with a bounded left approximate indentity, and let $\varphi\colon A\to\Bdd(X)$ be a representation.
If the orbit map $\mathrm{ev}_x\circ\varphi\colon A\to X$ is weakly compact for all $x\in X$, then $X_\varphi$ is complemented in $X$.
\end{thmx}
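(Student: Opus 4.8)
The plan is to combine the canonical extension furnished by Theorem A with the standard trick for producing a projection out of a bounded approximate identity, now carried out in the bidual rather than in $A$ itself. Since the orbit maps $\ev_x\circ\varphi$ are weakly compact for every $x\in X$, Theorem A gives a representation $\tilde\varphi\colon (A^{**},\ltArensProd)\to\Bdd(X)$ with $\tilde\varphi\circ\kappa_A=\varphi$, and moreover $\tilde\varphi$ is simultaneously multiplicative for both Arens products; this last point is what makes the following argument go through. Let $(e_i)_{i}$ be a bounded left approximate identity for $A$, say bounded by $M$. Passing to a subnet, we may assume $\kappa_A(e_i)\to p$ in the \weakStar{} topology of $A^{**}$ for some $p\in A^{**}$ with $\|p\|\le M$. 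The first claim is that $p$ is an idempotent for the \emph{left} Arens product $\ltArensProd$: indeed, for $a\in A$ one has $a\ltArensProd p=\wkStar\lim_j \kappa_A(ae_j)$, but also the left approximate identity property gives $\kappa_A(a)=\wkStar\lim_i\kappa_A(ae_i)$ wait---more carefully, since $(e_i)$ is a \emph{left} approximate identity, $e_i a\to a$; one uses instead that $p$ acts as a left unit, $p\ltArensProd a=a$ for all $a\in A$, from which $p\ltArensProd p=\wkStar\lim_i p\ltArensProd\kappa_A(e_i)=\wkStar\lim_i\kappa_A(e_i)=p$, using \weakStar{}-continuity of $p\ltArensProd(-)$ on $A^{**}$ for the left Arens product. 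So $p=p\ltArensProd p$.

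Next I set $P:=\tilde\varphi(p)\in\Bdd(X)$. Because $\tilde\varphi$ is multiplicative for $\ltArensProd$ and $p$ is a $\ltArensProd$-idempotent, $P$ is an idempotent operator on $X$, hence $PX$ is a complemented subspace of $X$. It remains to identify $PX$ with the essential space $X_\varphi$. For the inclusion $X_\varphi\subseteq PX$: given $a\in A$ and $x\in X$, we have $P\varphi(a)x=\tilde\varphi(p)\tilde\varphi(\kappa_A(a))x=\tilde\varphi(p\ltArensProd\kappa_A(a))x=\tilde\varphi(\kappa_A(a))x=\varphi(a)x$, using that $p$ is a left unit for $A$ inside $(A^{**},\ltArensProd)$. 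Thus $P$ fixes every vector of the form $\varphi(a)x$, and since $P$ is bounded it fixes all of $X_\varphi$; in particular $X_\varphi\subseteq PX$. For the reverse inclusion $PX\subseteq X_\varphi$: write $Px=\tilde\varphi(p)x$ and approximate $p$ in the \weakStar{} topology by $\kappa_A(e_i)$. Here one must be careful, since $\tilde\varphi$ need not be \weakStar{}-to-norm continuous; the point is to argue weakly. For $\xi\in X^*$ and $x\in X$, the functional $b\mapsto\langle\xi,\tilde\varphi(\kappa_A(b))x\rangle=\langle\xi,\varphi(b)x\rangle$ on $A$ extends \weakStar{}-continuously to $A^{**}$ precisely because the orbit map is weakly compact---this is exactly the mechanism behind Theorem A---so $\langle\xi,Px\rangle=\langle\xi,\tilde\varphi(p)x\rangle=\lim_i\langle\xi,\varphi(e_i)x\rangle$. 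Since each $\varphi(e_i)x\in X_\varphi$ and $X_\varphi$ is norm-closed hence weakly closed, its weak limits lie in $X_\varphi$; therefore $Px\in X_\varphi$. This gives $PX=X_\varphi$ and completes the proof.

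The main obstacle I anticipate is the second inclusion $PX\subseteq X_\varphi$, because it forces one to pass from \weakStar{}-convergence $\kappa_A(e_i)\to p$ in $A^{**}$ to weak convergence $\varphi(e_i)x\to Px$ in $X$, and $\tilde\varphi$ carries no a priori \weakStar{}-continuity. The resolution is to test against an arbitrary $\xi\in X^*$ and observe that the associated linear functional on $A$ is, by construction of the canonical extension in Theorem A, exactly one whose bidual extension is computed by $\tilde\varphi$; concretely the weak compactness of $\ev_x\circ\varphi$ guarantees $(\ev_x\circ\varphi)^{**}$ lands in $X$ (not merely $X^{**}$) and is \weakStar$-\sigma(X,X^*)$ continuous, which is the precise statement needed. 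I would isolate this continuity as a small lemma---or better, quote it directly from the proof of Theorem A / the discussion in \autoref{pgr:extReprXXdd}---rather than reprove it. A secondary, purely bookkeeping concern is that one has a \emph{left} approximate identity and must decide which Arens product makes $p$ a genuine one-sided unit; matching the conventions so that $p$ is a left unit for the left Arens product (so that $p\ltArensProd a=a$) is what makes both $P^2=P$ and $P|_{X_\varphi}=\id$ come out correctly, and this should be stated carefully at the outset.
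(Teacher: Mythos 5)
Your overall architecture is exactly the paper's (\autoref{prp:projIntoBidual} and \autoref{prp:essSpCompl}): form the canonical extension $\tilde\varphi$, evaluate it at a \weakStar{} cluster point $p$ of the bounded left approximate identity, and show the resulting idempotent has range exactly $X_\varphi$. Your second inclusion $PX\subseteq X_\varphi$ is also sound: it is a hands-on version of the paper's appeal to \autoref{prp:wkCpctSubsp}, resting on the \weakStar{}-to-\weakStar{} continuity of $(\ev_x\circ\varphi)^{**}$ and the weak closedness of $X_\varphi$.

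There is, however, one step that fails as written: the claim that $p$ is an idempotent for the \emph{left} Arens product, justified by ``\weakStar{}-continuity of $p\ltArensProd(-)$ on $A^{**}$.'' The left Arens product is \weakStar{}-continuous in the \emph{first} variable, not the second, so the passage $p\ltArensProd p=\wkStar\lim_i p\ltArensProd\kappa_A(e_i)$ is not available; for a merely left approximate identity the computation of $p\ltArensProd p$ reduces to the iterated limit of $\kappa_A(e_ie_j)$ in the wrong order, and there is no reason for it to equal $p$. The correct bookkeeping is the opposite of the one you propose at the end: as in \autoref{pgr:approxUnit}, $p$ is a left unit (hence idempotent) for the \emph{right} Arens product $\rtArensProd$, precisely because $\rtArensProd$ is \weakStar{}-continuous in the second variable while the left approximate identity property gives $e_ie_j\to_i e_j$. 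The repair is immediate and costs you nothing: replace $\ltArensProd$ by $\rtArensProd$ throughout your argument. Then $P^2=\tilde\varphi(p\rtArensProd p)=P$ and $P\varphi(a)x=\tilde\varphi(p\rtArensProd\kappa_A(a))x=\varphi(a)x$ follow from the multiplicativity of $\tilde\varphi$ for $\rtArensProd$, which is already guaranteed by \autoref{prp:extReprXXdd} -- you do not even need the stronger fact that $\tilde\varphi$ is multiplicative for both products. With that substitution your proof coincides with the paper's.
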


We make a few comments about the proof of Theorem~\ref{thmx:compl}.
When $A$ has a bounded left approximate identity, the Banach algebra $(A^{**},\rtArensProd)$ has a left unit $e$;
see \autoref{pgr:approxUnit}.
Thus, if the representation $\varphi\colon A\to\Bdd(X)$ has an extension to some representation $\tilde{\varphi}\colon (A^{**},\rtArensProd)\to\Bdd(X)$,
then $\tilde{\varphi}(e)$ is a natural candidate for a projection onto the essential space.
In general, the range of the projection $\tilde{\varphi}(e)$ can be strictly bigger than the essential space of $\varphi$.
Nevertheless, and somewhat surprisingly, we show that this is not the case for the canonical extensions from Theorem~\ref{thmx:ext}.
We also obtain bounds on the norm of the projection, which allows us to show that the essential space is $1$-complemented if $\varphi$ is contractive and $A$ has a contractive left approximate identity;
see \autoref{prp:essSpComplRefl}.

In \autoref{sec:reprCalgLp} and \autoref{sec:univRepr} we present two applications of our results to what are arguably the most relevant classes of Banach algebras: $C^*$-algebras and group algebras.
Concretely, we show in \autoref{prp:CalgLpOpAlg} that a \ca{} can be isometrically represented on some $L^p$-space, for $p\in [1,\infty)\setminus\{2\}$, if and only if it is commutative.
This result is to some extent unexpected, and it should be compared with the fact that every \ca{} can be represented on a \emph{noncommutative} $L^p$-space, for any $p\in [1,\infty)$;
see \autoref{rmk:CalgLpOpAlg}.

Let $G$ be a locally compact group.
The completion of $L^1(G)$ for nondegenerate representations on $L^p$-spaces is called the \emph{universal group $L^p$-operator algebra}, denoted $F^p(G)$; see \cite{Phi13arX:LpCrProd, GarThi15GpAlgLp}.
The Banach algebra $F^p(G)$ captures the isometric representation theory of $G$ on $L^p$-spaces, and is universal with respect to nondegenerate representations of $L^1(G)$ on $L^p$-spaces.
Using Theorem~\ref{thmx:compl}, we show that $F^p(G)$ is universal for \emph{all} representations of $L^1(G)$ on $L^p$-spaces; see \autoref{prp:degReprFpG}.
The results of this paper, in particular \autoref{prp:CalgLpOpAlg} and \autoref{prp:degReprFpG}, have been applied in \cite{GarThi16arX:ReprConvLq} to give a complete answer to the question of when
$F^p(G)$ and $F^q(G)$ are isometrically isomorphic, among others. 

\subsection*{Acknowledgements}

The authors would like to thank Philip G.~Spain for valuable electronic correspondence.
The authors would like to thank the staff and organizers, and S{\o}ren Eilers in particular, for the hospitality during their visits to the Research Program \emph{Classification of operator algebras, complexity, rigidity and dynamics}, held at the Institut Mittag-Leffler, between January and April of 2016.



\section{Extending representations of a Banach algebra to its bidual}
\label{sec:extRepr}

Throughout this section, $A$ denotes a Banach algebra, $X$ denotes a Banach space, and $\varphi\colon A\to\Bdd(X)$ denotes a representation.
Using the multiplicative operator $\alpha_X\colon(\Bdd(X)^{**},\rtArensProd)\to\Bdd(X,X^{**})$ constructed in \cite{GarThi16arX:PredualsBXY}, we extend $\varphi$ to a multiplicative operator $\tilde{\varphi}\colon (A^{**},\rtArensProd)\to\Bdd(X,X^{**})$;
see \autoref{pgr:extReprXXdd} and \autoref{prp:extReprXXdd}.
The main result of this section is \autoref{prp:charExtRepr}, where we characterize when the image of $\tilde\varphi$ is contained in $\Bdd(X)$ in terms of weak compactness of orbit maps $A\to X$.\
If this is the case, it follows that $\widetilde{\varphi}$ is multiplicative not only with respect to $\rtArensProd$, but also with respect to $\ltArensProd$.


\begin{pgr}
\label{pgr:extReprXXdd}
Given operators $a,b\colon X\to X^{**}$, their product $ab$ is defined as the composition $\kappa_{X^*}^*\circ a^{**}\circ b$.
This gives $\Bdd(X,X^{**})$ the structure of a unital Banach algebra, with unit $\kappa_X$;
see \cite[Paragraph~3.9]{GarThi16arX:PredualsBXY}.
Note that $\Bdd(X,X^{**})$ is \emph{anti-isomorphic} to $\Bdd(X^*)$ by \cite[Proposition~3.11]{GarThi16arX:PredualsBXY}

We let $\gamma_X\colon\Bdd(X)\to\Bdd(X,X^{**})$ be given by $\gamma_X(a)=\kappa_X\circ a$ for $a\in\Bdd(X)$.
Then $\gamma_X$ is an isometric, multiplicative operator;
see \cite[Paragraph~3.11]{GarThi16arX:PredualsBXY}.

Let $\alpha_X\colon\Bdd(X)^{**}\to\Bdd(X,X^{**})$ be the contractive operator introduced in \cite[Definition~3.16]{GarThi16arX:PredualsBXY}.
Instead of recalling the definition of $\alpha_X$, we provide a formula to compute $\alpha_X(S)$ for $S\in\Bdd(X)^{**}$;
see \autoref{prp:charAlpha}.
(For the purposes of this paper, \autoref{prp:charAlpha} can be taken as the definition of $\alpha_X$.)

In general, the bitranspose of a multiplicative operator between Banach algebras is again multiplicative if both bidual Banach algebras are equipped with the same Arens product.
Thus, $\varphi^{**}\colon (A^{**},\rtArensProd)\to(\Bdd(X)^{**},\rtArensProd)$ is multiplicative.
The map $\alpha_X$ is multiplicative for the right Arens product $\rtArensProd$ on $\Bdd(X)^{**}$, and we have $\gamma_X=\alpha_X\circ\kappa_{\Bdd(X)}$;
see Corollary~3.23 and Lemma~3.19 in \cite{GarThi16arX:PredualsBXY}.
We thus obtain the following commutative diagram, where each map is a multiplicative operator:
\[
\xymatrix{
(A^{**},\rtArensProd) \ar[r]^-{\varphi^{**}}
& (\Bdd(X)^{**},\rtArensProd) \ar[r]^{\alpha_X}
& \Bdd(X,X^{**}). \\
A \ar@{^{(}->}[u]^{\kappa_{A}} \ar[r]_-{\varphi}
& \Bdd(X) \ar@{^{(}->}[u]^{\kappa_{\Bdd(X)}} \ar[ur]_{\gamma_X}
}
\]
\end{pgr}


\begin{prp}
\label{prp:extReprXXdd}
The map $\tilde{\varphi}=\alpha_X\circ\varphi^{**}\colon A^{**}\to\Bdd(X,X^{**})$ is multiplicative for the right Arens product $\rtArensProd$ on $A^{**}$.
Moreover, we have $\|\tilde{\varphi}\|=\|\varphi\|$.
\end{prp}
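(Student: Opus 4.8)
The plan is to deduce both assertions directly from the commutative diagram of multiplicative operators assembled in \autoref{pgr:extReprXXdd}, together with the norm properties of its constituents; essentially no new computation is needed.

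For multiplicativity, I would first recall that the bitranspose of a multiplicative operator between Banach algebras is again multiplicative provided both biduals carry the \emph{same} Arens product. Hence $\varphi^{**}\colon(A^{**},\rtArensProd)\to(\Bdd(X)^{**},\rtArensProd)$ is multiplicative. Since $\alpha_X$ is multiplicative for the right Arens product $\rtArensProd$ on $\Bdd(X)^{**}$, as cited from \cite{GarThi16arX:PredualsBXY}, the composition $\tilde{\varphi}=\alpha_X\circ\varphi^{**}$ is multiplicative for $\rtArensProd$ on $A^{**}$; this is precisely the top row of the diagram in \autoref{pgr:extReprXXdd}.

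For the norm equality I would argue by two inequalities. The operator $\alpha_X$ is contractive and $\|\varphi^{**}\|=\|\varphi\|$, so $\|\tilde{\varphi}\|\le\|\varphi^{**}\|=\|\varphi\|$. Conversely, commutativity of the diagram gives $\tilde{\varphi}\circ\kappa_A=\gamma_X\circ\varphi$; since $\kappa_A$ and $\gamma_X$ are isometric, for every $a\in A$ we have $\|\tilde{\varphi}(\kappa_A(a))\|=\|\gamma_X(\varphi(a))\|=\|\varphi(a)\|$ while $\|\kappa_A(a)\|=\|a\|$, whence $\|\tilde{\varphi}\|\ge\|\varphi\|$. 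Combining the two estimates yields $\|\tilde{\varphi}\|=\|\varphi\|$.

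I do not expect any genuine obstacle here: the statement is a formal consequence of the properties of $\alpha_X$, $\gamma_X$ and of bitransposition established in \cite{GarThi16arX:PredualsBXY}. The only point that demands a little care is the bookkeeping of Arens products — one must work with the right Arens product $\rtArensProd$ uniformly on all three biduals, so that $\varphi^{**}$ remains multiplicative and the identity $\gamma_X=\alpha_X\circ\kappa_{\Bdd(X)}$ can be invoked.
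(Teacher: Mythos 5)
Your proof is correct and matches the paper's argument essentially verbatim: the multiplicativity is exactly the composition of the two multiplicative maps from \autoref{pgr:extReprXXdd}, and the norm equality is obtained from contractivity of $\alpha_X$ in one direction and from $\gamma_X\circ\varphi=\tilde{\varphi}\circ\kappa_A$ with $\gamma_X$ and $\kappa_A$ isometric in the other. No issues.
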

\begin{proof}
It remains to verify that $\|\tilde{\varphi}\|=\|\varphi\|$.
By \cite[Remark~3.17]{GarThi16arX:PredualsBXY}, the map $\alpha_X$ is contractive, so $\|\tilde{\varphi}\|\leq\|\varphi\|$.
The converse estimate follows using that $\gamma_X\circ\varphi = \tilde{\varphi}\circ\kappa_A$ and that $\gamma_X$ is isometric.
\end{proof}

Recall that an operator $f\colon X\to Y$ is said to be \emph{weakly compact} if the image of the unit ball of $X$ is contained in a weakly compact subset of $Y$.
By Gantmacher's theorem, $f$ is weakly compact if and only if its transpose $f^*$ is.
Moreover, $f$ is weakly compact if and only if the image of $f^{**}\colon X^{**}\to Y^{**}$ is contained in $Y$;
see \cite[Theorem~VI.5.5, p.185]{Con90Book:FctlAna}.

\begin{lma}
\label{prp:wkCpctSubsp}
Let $X$ and $Y$ be Banach spaces, let $Y_0\subseteq Y$ be a closed subspace, and let $f\colon X\to Y$ be an operator whose image is contained in $Y_0$.
Then $f$ is weakly compact if and only if the image of $f^{**}\colon X^{**}\to Y^{**}$ is contained in $Y_0$.
\end{lma}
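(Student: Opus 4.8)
The plan is to prove the two implications separately, putting essentially all the work into the forward direction. First I would dispose of the ``if'' direction: if the image of $f^{**}$ is contained in $Y_0$, then a fortiori it is contained in $Y\subseteq Y^{**}$, so $f$ is weakly compact by the characterization recalled just above (Gantmacher's theorem together with \cite[Theorem~VI.5.5]{Con90Book:FctlAna}). This uses nothing about the subspace $Y_0$.

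For the ``only if'' direction, assume $f$ is weakly compact. By the recalled characterization, the image of $f^{**}\colon X^{**}\to Y^{**}$ is contained in $\kappa_Y(Y)$, and the remaining task is to upgrade this to containment in $\kappa_Y(Y_0)$. Let $Y_0^\perp\subseteq Y^*$ denote the annihilator of $Y_0$. Since $f$ takes values in $Y_0$, every $\psi\in Y_0^\perp$ satisfies $f^*(\psi)=\psi\circ f=0$; hence $\langle f^{**}(\xi),\psi\rangle=\langle\xi,f^*(\psi)\rangle=0$ for all $\xi\in X^{**}$ and all $\psi\in Y_0^\perp$, so the image of $f^{**}$ lies in the annihilator $(Y_0^\perp)^\perp\subseteq Y^{**}$. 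It therefore suffices to observe that $\kappa_Y(Y)\cap(Y_0^\perp)^\perp=\kappa_Y(Y_0)$: an element $\kappa_Y(y)$ annihilates $Y_0^\perp$ precisely when $y\in{}^\perp(Y_0^\perp)$, and ${}^\perp(Y_0^\perp)=Y_0$ because $Y_0$ is norm-closed and convex, hence weakly closed (Hahn--Banach). Combining this with the inclusion coming from weak compactness gives that the image of $f^{**}$ is contained in $\kappa_Y(Y_0)$, as desired.

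The argument is short and I do not anticipate a genuine obstacle; the only points to handle with care are the bipolar identity ${}^\perp(Y_0^\perp)=Y_0$ and keeping the canonical embeddings $\kappa_X$ and $\kappa_Y$ straight when moving between $Y_0$, $Y$, and their biduals. If one prefers to avoid the explicit computation with annihilators, an equivalent route is to factor $f=\iota\circ f_0$ through the isometric inclusion $\iota\colon Y_0\hookrightarrow Y$, note that $f_0\colon X\to Y_0$ is still weakly compact since the weak topology of $Y_0$ is the subspace topology inherited from $Y$, apply the recalled characterization to $f_0$, and conclude using the naturality $\kappa_Y\circ\iota=\iota^{**}\circ\kappa_{Y_0}$ together with $f^{**}=\iota^{**}\circ f_0^{**}$.
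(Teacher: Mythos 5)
Your proof is correct, and the forward implication via annihilators is a genuinely different route from the paper's. The paper factors $f=\iota\circ f_0$ through the inclusion $\iota\colon Y_0\hookrightarrow Y$, takes bitransposes, and proves the identity $Y_0=Y_0^{**}\cap Y$ inside $Y^{**}$ by choosing a net in $Y_0$ converging weak${}^*$ to a given element and invoking Hahn--Banach to pass from weak closedness to norm closedness. Your computation --- $f^*(\psi)=0$ for $\psi\in Y_0^{\perp}$, hence $f^{**}(X^{**})\subseteq(Y_0^{\perp})^{\perp}$ --- reaches the same containment without introducing $f_0$ or any nets; since $\iota^{**}$ identifies $Y_0^{**}$ isometrically with $(Y_0^{\perp})^{\perp}$, your key identity $\kappa_Y(Y)\cap(Y_0^{\perp})^{\perp}=\kappa_Y(Y_0)$ is exactly the paper's claim $Y_0^{**}\cap Y=Y_0$ in disguise, and both reduce to the Hahn--Banach fact that a norm-closed subspace equals its double annihilator. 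Your version buys a shorter, computation-free-of-nets argument at the cost of juggling annihilators; the paper's version keeps everything phrased in terms of biduals of the subspace, which matches how the lemma is later applied. The parenthetical alternative you sketch at the end is essentially the paper's argument, and the backward implication is handled identically in both.
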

\begin{proof}
If the image of $f^{**}$ is contained in $Y_0$, then it is also contained in $Y$, and consequently $f$ is weakly compact.
To show the backward implication, assume that $f$ is weakly compact.
Let $\iota\colon Y_0\to Y$ be the inclusion map.
Using that the image of $f$ is contained in $Y_0$, we let $f_0\colon X\to Y_0$ be the unique map such that $f=\iota\circ f_0$.

We identify $Y_0^{**}$ with a subspace of $Y^{**}$, and $Y$ with a subspace of $Y^{**}$.
There is a commutative diagram:
\[
\xymatrix{
X^{**} \ar[r]^{f_0^{**}}
& Y_0^{**} \ar@{^{(}->}[r]^{\iota^{**}}
& Y^{**} \\
X \ar[r]_{f_0} \ar@{^{(}->}[u]^{\kappa_X}
& Y_0 \ar@{^{(}->}[r]_{\iota} \ar@{^{(}->}[u]^{\kappa_{Y_0}}
& Y. \ar@{^{(}->}[u]^{\kappa_Y} \\
}
\]

We claim that $Y_0 = Y_0^{**} \cap Y$ in $Y^{**}$.
The inclusion `$\subseteq$' is clear.
To show the reverse inclusion, let $y\in Y_0^{**}\cap Y$.
Choose a net $(y_j)_{j\in J}$ in $Y_0$ that converges \weakStar{} to $y$ in $Y_0^{**}$.
Since the inclusion $\iota^{**}$ is \weakStar{} continuous, we deduce that $(y_j)_{j\in J}$ converges \weakStar{} to $y$ in $Y^{**}$.
Since the net $(y_j)_{j\in J}$ and $y$ belong to $Y$, it follows that $(y_j)_{j\in J}$ converges weakly to $y$.
It follows from the Hahn-Banach theorem that (norm-)closed subspaces are weakly closed, which implies that $y$ belongs to $Y_0$, which proves the claim.

Since $f$ is weakly compact, the image of $f^{**}$ is contained in $Y$.
Using that $\iota^{**}$ is isometric, it follows that the image of $f^{**}$ is contained in the image of $f_0^{**}$, which in turn is contained in $Y_0^{**}$.
Using the claim, the result follows.
\end{proof}

The following is the case $X=Y$ of \cite[Lemma~3.18]{GarThi16arX:PredualsBXY}.

\begin{lma}
\label{prp:charAlpha}
For $x\in X$, let $\ev_x\colon\Bdd(X)\to X$ be the evaluation map. 
Then $\alpha_X(S)(x) = \ev_x^{**}(S)$ for all $S\in\Bdd(X)^{**}$ and $x\in X$.
\end{lma}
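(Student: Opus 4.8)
The plan is to deduce the statement directly from \cite[Lemma~3.18]{GarThi16arX:PredualsBXY}, which proves precisely this identity for operators between two Banach spaces $X$ and $Y$: specializing there to $Y=X$ gives $\alpha_X(S)(x)=\ev_x^{**}(S)$ verbatim. For a more self-contained argument, here is the route I would take. Recall from \cite[Definition~3.16 and Lemma~3.19]{GarThi16arX:PredualsBXY} that $\alpha_X$ is a \weakStar{}-to-\weakStar{} continuous map with $\gamma_X=\alpha_X\circ\kappa_{\Bdd(X)}$, and that $\kappa_{\Bdd(X)}(\Bdd(X))$ is \weakStar{}-dense in $\Bdd(X)^{**}$. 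Fixing $x\in X$, it then suffices to show that the two maps $S\mapsto\alpha_X(S)(x)$ and $S\mapsto\ev_x^{**}(S)$ from $\Bdd(X)^{**}$ to $X^{**}$ are \weakStar{}-to-\weakStar{} continuous and agree on the dense subset $\kappa_{\Bdd(X)}(\Bdd(X))$.

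The agreement is a one-line unwinding: for $a\in\Bdd(X)$, using $\gamma_X(a)=\kappa_X\circ a$ and the naturality identity $\kappa_X\circ\ev_x=\ev_x^{**}\circ\kappa_{\Bdd(X)}$ (which holds for every bounded operator), one gets
\[
\alpha_X\bigl(\kappa_{\Bdd(X)}(a)\bigr)(x)=\gamma_X(a)(x)=\kappa_X\bigl(\ev_x(a)\bigr)=\ev_x^{**}\bigl(\kappa_{\Bdd(X)}(a)\bigr).
\]
For continuity, $\ev_x^{**}$ is a bitranspose and hence \weakStar{}-to-\weakStar{} continuous; and, identifying $\Bdd(X,X^{**})$ with $(X\tensProj X^*)^*$, the point-evaluation $T\mapsto T(x)$ is \weakStar{}-to-\weakStar{} continuous because pairing $T$ with the elementary tensor $x\otimes f$ returns $\langle T(x),f\rangle$; composing with the \weakStar{}-continuous map $\alpha_X$ then gives what we want. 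Since a \weakStar{}-continuous map into a Hausdorff space is determined by its values on a \weakStar{}-dense set, the two maps coincide everywhere, which is the assertion.

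The only nontrivial input is the \weakStar{}-continuity of $\alpha_X$ (in essence, $\alpha_X$ is the transpose of the natural map $X\tensProj X^*\to\Bdd(X)^*$, $x\otimes f\mapsto\ev_x^*(f)$), and this is exactly the sort of structural fact that \cite{GarThi16arX:PredualsBXY} supplies; granting it, I expect no real obstacle — the rest is bookkeeping and a density argument.
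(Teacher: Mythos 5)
Your proposal is correct and matches the paper, which gives no proof of this lemma beyond the remark immediately preceding it that the statement is the case $Y=X$ of \cite[Lemma~3.18]{GarThi16arX:PredualsBXY} --- exactly your first sentence. Your additional self-contained sketch (agreement of the two maps on the \weakStar{}-dense image of $\kappa_{\Bdd(X)}$, plus \weakStar{}-to-\weakStar{} continuity of both sides, granting that $\alpha_X$ is a transpose) is also sound, but the paper does not carry it out.
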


We consider $\Bdd(X)$ as a subalgebra of $\Bdd(X,X^{**})$ via the map $\gamma_X$.
The following consequence is our desired (partial) answer to \autoref{pbm:extendRepr}.
\begin{thm}
\label{prp:charExtRepr}
Let $\varphi\colon A\to\Bdd(X)$ be a representation of a Banach algebra $A$ on a Banach space $X$, and set $\tilde{\varphi}=\alpha_X\circ\varphi^{**}\colon A^{**}\to\Bdd(X,X^{**})$.
Then the image of $\tilde{\varphi}$ is contained in $\Bdd(X)$ if and only if for every $x\in X$, the orbit map $\ev_x\circ\varphi\colon A\to X$ is weakly compact.
Moreover, if this is the case, then:
\begin{enumerate}
 \item The essential spaces of $\varphi$ and $\tilde\varphi$ agree.
 \item $\widetilde{\varphi}\colon A^{**}\to \Bdd(X)$ is multiplicative with respect to $\rtArensProd$ and $\ltArensProd$.
 \item $\widetilde{\varphi}$ is an extension of $\varphi$.
\end{enumerate}
\end{thm}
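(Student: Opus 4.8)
The plan is to deduce everything from the formula $\alpha_X(S)(x)=\ev_x^{**}(S)$ of \autoref{prp:charAlpha}, applied to $S=\varphi^{**}(\Phi)$ for $\Phi\in A^{**}$. First I would unwind the composition: for $\Phi\in A^{**}$ and $x\in X$ we have
\[
\tilde\varphi(\Phi)(x)=\alpha_X\bigl(\varphi^{**}(\Phi)\bigr)(x)=\ev_x^{**}\bigl(\varphi^{**}(\Phi)\bigr)=(\ev_x\circ\varphi)^{**}(\Phi),
\]
where the last equality uses functoriality of the bitranspose (so that $\ev_x^{**}\circ\varphi^{**}=(\ev_x\circ\varphi)^{**}$, both being maps $A^{**}\to X^{**}$). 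This identifies the ``orbit map of $\tilde\varphi$ at $x$'', namely $\ev_x\circ\tilde\varphi\colon A^{**}\to X^{**}$, with the bitranspose $(\ev_x\circ\varphi)^{**}$ of the orbit map of $\varphi$ at $x$. This is the single computation on which the whole statement rests.

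For the main equivalence: the image of $\tilde\varphi$ is contained in $\Bdd(X)$ (viewed inside $\Bdd(X,X^{**})$ via $\gamma_X$, i.e.\ landing in $\kappa_X\circ\Bdd(X)$) if and only if $\tilde\varphi(\Phi)(x)\in\kappa_X(X)\subseteq X^{**}$ for every $\Phi\in A^{**}$ and every $x\in X$. By the displayed identity this says exactly that the image of $(\ev_x\circ\varphi)^{**}$ is contained in $X$, for every $x$. By the Gantmacher-type criterion recalled just before \autoref{prp:wkCpctSubsp} (an operator $f$ is weakly compact iff the image of $f^{**}$ lands in the codomain), this is equivalent to $\ev_x\circ\varphi$ being weakly compact for every $x\in X$. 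One minor point to address carefully is that $\gamma_X$ is an isometric isomorphism onto its range, so ``image contained in $\Bdd(X)$'' and ``each $\tilde\varphi(\Phi)$ factors through $\kappa_X$'' are genuinely the same condition; this is routine given that $\gamma_X(a)=\kappa_X\circ a$.

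For the ``moreover'' part, assume the equivalent conditions hold, so $\tilde\varphi$ corestricts to a map $A^{**}\to\Bdd(X)$. Item (3): $\tilde\varphi\circ\kappa_A=\gamma_X\circ\varphi$ was recorded in \autoref{pgr:extReprXXdd} (it follows from $\gamma_X=\alpha_X\circ\kappa_{\Bdd(X)}$ and $\varphi^{**}\circ\kappa_A=\kappa_{\Bdd(X)}\circ\varphi$), so after identifying $\Bdd(X)$ with its image under $\gamma_X$ this says precisely that $\tilde\varphi$ extends $\varphi$. Item (2): multiplicativity for $\rtArensProd$ is \autoref{prp:extReprXXdd}; for $\ltArensProd$, one uses that $\Bdd(X)$ (unlike $\Bdd(X,X^{**})$) really is an algebra of operators on a single Banach space, and invokes the symmetry property of the canonical extension alluded to in the introduction — concretely, one reruns the argument with the left Arens product, or cites the corresponding statement from \cite{GarThi16arX:PredualsBXY} that $\alpha_X$ becomes multiplicative for $\ltArensProd$ once the range is in $\Bdd(X)$; I would keep this step short since it is the ``remarkable feature'' advertised earlier and presumably has a clean reference. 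Item (1): the essential space of $\tilde\varphi$ is the closed span of $\{\tilde\varphi(\Phi)x:\Phi\in A^{**},x\in X\}$. Since $\tilde\varphi$ extends $\varphi$, the essential space $X_\varphi$ of $\varphi$ is contained in $X_{\tilde\varphi}$. For the reverse inclusion I would use the identity $\tilde\varphi(\Phi)(x)=(\ev_x\circ\varphi)^{**}(\Phi)$ together with \autoref{prp:wkCpctSubsp} applied to $Y=X$, $Y_0=X_\varphi$, $f=\ev_x\circ\varphi$ (whose image lies in $X_\varphi$ by definition of the essential space): weak compactness of $f$ forces the image of $f^{**}$ into $X_\varphi$, so $\tilde\varphi(\Phi)(x)\in X_\varphi$ for all $\Phi,x$, whence $X_{\tilde\varphi}\subseteq X_\varphi$.

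The main obstacle I anticipate is not in (1) or the equivalence — those are quick once the functoriality identity is in place — but in pinning down (2) for the left Arens product cleanly: one must be sure that ``multiplicative for $\rtArensProd$ with range in $\Bdd(X)$'' genuinely upgrades to ``also multiplicative for $\ltArensProd$'', which relies on the specific structure of $\alpha_X$ from \cite{GarThi16arX:PredualsBXY} rather than on formal nonsense, so the write-up should either quote the precise statement there or supply the symmetric computation.
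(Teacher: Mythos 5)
Your proposal is correct and follows essentially the same route as the paper: the identity $\tilde\varphi(\Phi)(x)=(\ev_x\circ\varphi)^{**}(\Phi)$ from \autoref{prp:charAlpha} plus the Gantmacher criterion gives the equivalence, \autoref{prp:wkCpctSubsp} with $Y_0=X_\varphi$ gives (1), and $\gamma_X=\alpha_X\circ\kappa_{\Bdd(X)}$ gives (3). The one step you leave to a reference --- multiplicativity for $\ltArensProd$ once the range lies in $\gamma_X(\Bdd(X))$ --- is handled in the paper exactly as you anticipate, by quoting Lemmas~3.20 and~3.21 of \cite{GarThi16arX:PredualsBXY}.
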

\begin{proof}
Using \autoref{prp:charAlpha} at the first step, we have
\[
\tilde\varphi(S)(x) = \ev_x^{**}(\varphi^{**}(S)) = (\ev_x\circ\varphi)^{**}(S),
\]
for all $S\in A^{**}$ and $x\in X$.
Thus, given $x\in X$, the orbit map $\ev_x\circ\varphi$ is weakly compact if and only if $\tilde\varphi(S)(x)$ belongs to $\Bdd(X)$ for every $S\in A^{**}$,
by the comments before \autoref{prp:wkCpctSubsp}.
This implies the first statement.

From now on, we assume that every orbit map $\ev_x\circ\varphi$ is weakly compact, and hence that the image of $\widetilde{\varphi}$ is contained
in the image of $\gamma_X$.

(1). We clearly have $X_\varphi\subseteq X_{\tilde\varphi}$.
Conversely, given $x\in X$, since $\ev_x\circ\varphi$ is weakly compact with image contained in $X_\varphi$, it follows from \autoref{prp:wkCpctSubsp} that the image of $(\ev_x\circ\varphi)^{**}$ is contained in $X_\varphi$ as well.
Using \autoref{prp:charAlpha} at the first step, we deduce that
\[
\tilde\varphi(S)x = (\ev_x\circ\varphi)^{**}(S) \in X_\varphi,
\]
for every $S\in A^{**}$ and $x\in X$.
Hence, $X_{\tilde\varphi}\subseteq X_\varphi$, as desired.

(2). That $\widetilde{\varphi}$ is multiplicative with respect to $\rtArensProd$ follows from \autoref{prp:extReprXXdd}.
Given $a,b\in A$, we use the second identity of Lemma~3.20 of \cite{GarThi16arX:PredualsBXY} at the second step (with $T=\varphi^{**}(a)$ and $S=\widetilde{\varphi}(b)\in \gamma_X(\Bdd(X))$),
and Lemma~3.21 of \cite{GarThi16arX:PredualsBXY} at the third step, to get
\[\widetilde{\varphi}(a)\widetilde{\varphi}(b)= \alpha_X(\varphi^{**}(a))\alpha_X(\varphi^{**}(b))=\varphi^{**}(a)\alpha_X(\varphi^{**}(b))=\alpha_X(\varphi^{\ast\ast}(a\ltArensProd b)),\]
as desired.

(3). This is immediate, since $\alpha_X\circ\kappa_{\Bdd(X)}=\gamma_X$.
\end{proof}




\begin{pgr}
\label{pgr:extWCpct}
If $X$ and $Y$ are Banach spaces, there are some very general situations in which every operator $X\to Y$ is automatically weakly compact.
For example, if either $X$ or $Y$ is reflexive (since a Banach space $E$ is reflexive if and only if $\id_E$ is weakly compact).
However, there are situations where neither $X$ nor $Y$ is reflexive, yet every operator $X\to Y$ is weakly compact.
This is for instance the case when $X$ has \Pelc's property $(V)$ and $Y$ contains no isomorphic copy of $c_0$;
this is essentially an immediate consequence of \cite[Theorem~5]{BesPel58BasesUncondConvSeries}.



By a deep result of Pfitzner, \cite[Carollary~6]{Pfi94WkCpctDualCAlg}, every \ca{} $A$ has property $(V)$.
It follows that if $Y$ contains no isomorphic copy of $c_0$ then every operator $A\to Y$ is weakly compact.
This had previously been obtained in \cite[Theorem~4.2]{AkeDodGam72WkCpctDualCAlg}.

The class of spaces that do not contain an isomorphic copy of $c_0$ is known to 
include all reflexive space, and all $L^p$-spaces for $p\in[1,\infty)$.
\end{pgr}

Since \ca{s} are Arens regular, we do not have to specify the choice of Arens product in their biduals.

\begin{cor}
\label{prp:extReprCa}
Let $A$ be a \ca{} and let $X$ be a Banach space that does not contain an isomorphic copy of $c_0$.
Then every representation of $A$ on $X$ has a canonical extension to a representation of $A^{**}$.
\end{cor}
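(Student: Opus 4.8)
The plan is to read this off from \autoref{prp:charExtRepr} together with the structural facts about \ca{s} collected in \autoref{pgr:extWCpct}. First I would recall that, by Pfitzner's theorem, every \ca{} has \Pelc's property $(V)$, and that consequently every bounded operator from $A$ into a Banach space omitting an isomorphic copy of $c_0$ is weakly compact; both of these are stated in \autoref{pgr:extWCpct}. Applying this with the space $X$ of the statement, we obtain that \emph{every} operator $A\to X$ is weakly compact.

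Now let $\varphi\colon A\to\Bdd(X)$ be an arbitrary representation. For each $x\in X$ the orbit map $\ev_x\circ\varphi\colon A\to X$ is a bounded operator, hence weakly compact by the previous paragraph. Thus the equivalent condition of \autoref{prp:charExtRepr} holds, so the canonical map $\tilde\varphi=\alpha_X\circ\varphi^{**}\colon A^{**}\to\Bdd(X,X^{**})$ has image contained in $\Bdd(X)$. By parts (3) and (2) of that theorem, $\tilde\varphi$ is then an extension of $\varphi$ that is multiplicative with respect to both Arens products; since \ca{s} are Arens regular, the two products on $A^{**}$ coincide, and $\tilde\varphi$ is simply a representation of $A^{**}$ extending $\varphi$. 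This is the asserted canonical extension.

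I expect no real obstacle here, as the content has already been packaged into \autoref{prp:charExtRepr} and into the results quoted in \autoref{pgr:extWCpct}; the only minor point is the (trivial) reduction from ``every operator $A\to X$ is weakly compact'' to ``every orbit map is weakly compact,'' which is immediate because orbit maps are bounded. It may be worth remarking that the same argument gives a canonical extension of every representation on $X$ whenever $A$ is any Banach algebra with property $(V)$ and $X$ contains no isomorphic copy of $c_0$, not just for \ca{s}.
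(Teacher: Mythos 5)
Your argument is correct and is exactly the one the paper intends: Pfitzner's theorem gives property $(V)$ for $A$, hence every operator $A\to X$ (in particular every orbit map) is weakly compact, and \autoref{prp:charExtRepr} then yields the canonical extension, with Arens regularity of \ca{s} making the choice of product immaterial. Your closing remark about general Banach algebras with property $(V)$ is a valid and mild generalization, but the core proof matches the paper's.
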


For unital representations of unital \ca{s}, \autoref{prp:extReprCa} has implicitly been obtained by Spain, \cite[Section~9]{Spa15ReprCaInDualBAlg}.
However, for our applications in \autoref{sec:projEssSp}, we have to consider nonunital Banach algebras and nonunital representations.

\section{Projections onto the essential space of a representation}
\label{sec:projEssSp}

Let $X$ be a Banach space, and let $X_0\subseteq X$ be a closed subspace.
Recall that a \emph{projection} from $X$ onto $X_0$ is an idempotent operator on $X$ with image $X_0$.
For $\lambda\geq 1$, we say that $X_0$ is \emph{$\lambda$-complemented} in $X$ if there exists a projection from $X$ onto $X_0$ of norm at most $\lambda$.
The space $X_0$ is said to be \emph{complemented} if it is $\lambda$-complemented for some $\lambda$.
In this section, we study the following:

\begin{pbm}
\label{pbm:essSpace}
Let $A$ be a Banach algebra, and let $X$ be a Banach space.
Under which assumptions on $A$ and $X$ does it follow that the essential subspace of every representation $A\to\Bdd(X)$ is complemented?
Moreover, if such a projection exists, can its norm be estimated?
\end{pbm}


Of course, any solution to \autoref{pbm:essSpace} depends crucially on $A$ and $X$.
The better behaved $A$ is, the fewer assumptions we need to put on $X$, and conversely.
The extreme points for this tradeoff are given by the case that $X$ is isomorphic to a Hilbert space (where every subspace is 1-complemented), and on the other end that $A$ has a left unit
(where the image of the left unit is the desired projection).


The main result of this section is \autoref{prp:essSpCompl}, which provides a positive solution to \autoref{pbm:essSpace} for the case that $A$ has a bounded left approximate identity and every operator $A\to X$ is weakly compact.
The following example shows that every Banach space which is not a Hilbert space admits a representation of \emph{some} Banach algebra whose essential space is not complemented.

\begin{exa}
\label{exa:XnotHilbert}
Let $X$ be a Banach space not isomorphic to a Hilbert space.
By the Lindenstrauss-Tzafriri theorem, \cite{LinTza71ComplSubspPbm}, we can choose a closed subspace $X_0$ that is not complemented in $X$.
Consider $\Bdd(X,X_0)$, which we identify with the space of operators $a\in\Bdd(X)$ satisfying $a(X)\subseteq X_0$.
Then $\Bdd(X,X_0)$ is a closed subalgebra of $\Bdd(X)$.
Then the inclusion map is an isometric representation $\varphi\colon\Bdd(X,X_0)\to\Bdd(X)$, whose
essential space is easily seen to be exactly $X_0$.
\end{exa}

Analogously, one may ask what happens if the Banach algebra is fixed but the Banach space is allowed to vary:

\begin{qst}
\label{qst:AnoLeftUnit}
Let $A$ be a Banach algebra without left unit.
Does there exist a representation $\varphi\colon A\to\Bdd(X)$ on some Banach space $X$ such that the essential space of $\varphi$ is not complemented?
\end{qst}

\begin{exa}
\label{exa:c0_on_linfty}
Let $\varphi\colon c_0\to \Bdd(\ell_\infty)$ be the canonical isometric representation given by pointwise multiplication.
It is easy to show that the essential space of $\varphi$ is exactly the subspace $c_0$ in $\ell_\infty$.
However, Phillips' theorem states that $c_0$ is not complemented in $\ell_\infty$.
(See \cite{Whi66ProjMC0} for a simple proof of Phillips' theorem.)
This provides a positive answer to \autoref{qst:AnoLeftUnit} for $A=c_0$.
\end{exa}

\begin{pgr}
\label{exa:A_on_MA}
\label{pgr:A_on_MA}
We can generalize \autoref{exa:c0_on_linfty} to many other nonunital \ca{s}, as follows.
Taylor showed in \cite[Corollary~3.7]{Tay72GenPhillipsThm} that a nonunital $C^*$-algebra $A$ is uncomplemented in its multiplier algebra $M(A)$ if $A$ has a well-behaved approximate identity.
(See \cite{Tay72GenPhillipsThm} for the definition.)
By \cite[Proposition~3.1]{Tay72GenPhillipsThm}, it follows that 
every non-unital, $\sigma$-unital \ca{} is uncomplemented in its multiplier algebra.
In particular, this gives a positive answer to \autoref{qst:AnoLeftUnit} for every separable \ca{}.

In general, however, a \ca{} need not be uncomplemented in its multiplier algebra.
Indeed, there exist non-unital \ca{s} whose multiplier algebras agree with their minimal unitizations, and every \ca{} is complemented in its minimal unitization.
For examples and a further discussion, 
we refer to \cite{GhaKos16arX:ExtCpctOpsTrivMult}.
\end{pgr}

\begin{pgr}
\label{pgr:approxUnit}
Let $A$ be a Banach algebra.
Given a bounded left approximate identity $(e_j)_{j\in J}$ in $A$, let $e$ be any \weakStar{} cluster point of the net $(\kappa_A(e_j))_{j\in J}$ in $A^{**}$.
Then $e\rtArensProd a=a$ for every $a\in A$.
Since the right Arens product is \weakStar{} continuous in the second variable, we deduce that $e\rtArensProd S=S$ for all $S\in A^{**}$.
Thus, $e$ is a left unit for the right Arens products.
The converse also holds, and hence $A$ has a bounded left approximate identity (with bound $\lambda$) if and only if $(A^{**},\rtArensProd)$ has a left unit (with norm at most $\lambda$);
see \cite[Proposition~5.1.8, p.527]{Pal94BAlg1}.
\end{pgr}

\begin{lma}
\label{prp:projIntoBidual}
Let $A$ be a Banach algebra, 
let $X$ be a Banach space, and let $\varphi\colon A\to\Bdd(X)$ be a representation.
Set $\tilde{\varphi}=\alpha_X\circ\varphi^{**}\colon A^{**}\to\Bdd(X,X^{**})$. 
Let $e\in A^{**}$ be a left unit for $(A^{**},\rtArensProd)$, 
and set $p=\tilde{\varphi}(e)$.
Then $p$ is an idempotent in $\Bdd(X,X^{**})$ with $\|p\|\leq\|e\|\|\varphi\|$ and such that $p\tilde{\varphi}(S)=\tilde{\varphi}(S)$ for all $S\in A^{**}$.
\end{lma}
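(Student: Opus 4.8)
The plan is to verify the three assertions about $p = \tilde\varphi(e)$ in turn, using the multiplicativity of $\tilde\varphi$ for $\rtArensProd$ from \autoref{prp:extReprXXdd} together with the fact that $e$ is a left unit for $(A^{**},\rtArensProd)$. First I would record the norm bound: since $\|\tilde\varphi\| = \|\varphi\|$ by \autoref{prp:extReprXXdd}, we immediately get $\|p\| = \|\tilde\varphi(e)\| \le \|\tilde\varphi\|\,\|e\| = \|\varphi\|\,\|e\|$. No further work is needed here.

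Next I would prove that $p\tilde\varphi(S) = \tilde\varphi(S)$ for all $S \in A^{**}$. Because $e$ is a left unit for $\rtArensProd$, we have $e \rtArensProd S = S$ for every $S \in A^{**}$. Applying $\tilde\varphi$ and using that it is multiplicative for $\rtArensProd$ (with respect to the product on $\Bdd(X,X^{**})$ from \autoref{pgr:extReprXXdd}), we obtain
\[
\tilde\varphi(S) = \tilde\varphi(e \rtArensProd S) = \tilde\varphi(e)\,\tilde\varphi(S) = p\,\tilde\varphi(S),
\]
which is the desired identity. Finally, idempotency of $p$ is the special case $S = e$ of this identity: $p^2 = \tilde\varphi(e)\tilde\varphi(e) = \tilde\varphi(e \rtArensProd e) = \tilde\varphi(e) = p$, since $e \rtArensProd e = e$ as $e$ is a left unit. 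Thus $p$ is an idempotent in the unital Banach algebra $\Bdd(X,X^{**})$.

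I do not anticipate any serious obstacle here; the statement is essentially a formal consequence of $\tilde\varphi$ being a $\rtArensProd$-multiplicative operator of norm $\|\varphi\|$ sending the left unit $e$ to $p$. The only point requiring a modicum of care is to make sure that the product being used throughout is the one on $\Bdd(X,X^{**})$ defined in \autoref{pgr:extReprXXdd} (namely $ab = \kappa_{X^*}^* \circ a^{**} \circ b$), so that "$p$ is an idempotent in $\Bdd(X,X^{**})$" is meant with respect to that algebra structure, with unit $\kappa_X$. With that convention fixed, the three claims follow directly as above. (Note that at this stage we do \emph{not} assume the orbit maps are weakly compact, so $p$ need not lie in $\gamma_X(\Bdd(X))$; that refinement will be brought in later when we want an honest projection on $X$.)
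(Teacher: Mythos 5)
Your proof is correct and follows essentially the same route as the paper's: both arguments derive the norm bound from $\|\tilde\varphi\|=\|\varphi\|$ (from \autoref{prp:extReprXXdd}) and obtain idempotency and the absorption identity $p\tilde\varphi(S)=\tilde\varphi(S)$ directly from $\rtArensProd$-multiplicativity of $\tilde\varphi$ together with $e$ being a left unit. The only cosmetic difference is that you deduce $p^2=p$ as the case $S=e$ of the general identity, whereas the paper computes it separately; this changes nothing of substance.
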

\begin{proof}
By \autoref{prp:extReprXXdd}, we have $\|\tilde\varphi\|=\|\varphi\|$, which implies that
\[
\|p\|
= \| \tilde{\varphi}(e) \|
\leq \|\tilde{\varphi}\| \|e\|
= \|\varphi\| \|e\|.
\]

Since $\tilde{\varphi}$ is multiplicative for the right Arens product by \autoref{prp:extReprXXdd}, we deduce that
$p^2
= \tilde{\varphi}(e\rtArensProd e)
= \tilde{\varphi}(e)
= p$.
Finally, for every $S\in A^{**}$, we have
\[
p\tilde{\varphi}(S)
= \tilde{\varphi}(e) \tilde{\varphi}(S)
= \tilde{\varphi}(e\rtArensProd S)
= \tilde{\varphi}(S).\qedhere
\]
\end{proof}

The following is a (partial) solution to \autoref{pbm:essSpace}.

\begin{thm}
\label{prp:essSpCompl}
Let $A$ be a Banach algebra with a bounded left approximate identity with bound $\lambda$, let $X$ be a Banach space such that every operator $A\to X$ is weakly compact, and let $\varphi\colon A\to\Bdd(X)$ be a representation.
Then the essential space of $\varphi$ is $\lambda\|\varphi\|$-complemented in $X$.
\end{thm}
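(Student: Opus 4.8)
The plan is to assemble the results of \autoref{sec:extRepr} with \autoref{prp:projIntoBidual}. First, since every operator $A\to X$ is weakly compact, in particular the orbit map $\ev_x\circ\varphi\colon A\to X$ is weakly compact for every $x\in X$. Hence \autoref{prp:charExtRepr} applies: the canonical extension $\tilde\varphi=\alpha_X\circ\varphi^{**}$ has image contained in $\gamma_X(\Bdd(X))\subseteq\Bdd(X,X^{**})$, and moreover, by part~(1) of that theorem, the essential spaces of $\varphi$ and $\tilde\varphi$ coincide; denote this common subspace by $X_\varphi$. Since $\gamma_X$ is an isometric algebra embedding, $\tilde\varphi$ may thus be regarded as a representation $A^{**}\to\Bdd(X)$ with $\|\tilde\varphi\|=\|\varphi\|$.

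Next, because $A$ has a bounded left approximate identity with bound $\lambda$, \autoref{pgr:approxUnit} provides a left unit $e$ for $(A^{**},\rtArensProd)$ with $\|e\|\leq\lambda$. Set $p=\tilde\varphi(e)$. By \autoref{prp:projIntoBidual}, $p$ is an idempotent with $\|p\|\leq\|e\|\,\|\varphi\|\leq\lambda\|\varphi\|$ and $p\,\tilde\varphi(S)=\tilde\varphi(S)$ for all $S\in A^{**}$; under the identification above, $p$ is a genuine idempotent operator on $X$ of norm at most $\lambda\|\varphi\|$. It then remains only to check that its range equals $X_\varphi$, and this is where the argument is slightly subtle: a priori the range of $\tilde\varphi(e)$ could be strictly larger than the essential space, and what rescues us is precisely the equality $X_{\tilde\varphi}=X_\varphi$ furnished by \autoref{prp:charExtRepr}(1) (which in turn rests on \autoref{prp:wkCpctSubsp} and the weak compactness hypothesis).

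For the range computation I would argue as follows. For the inclusion $\operatorname{ran}(p)\subseteq X_\varphi$: for every $x\in X$ the vector $px=\tilde\varphi(e)x$ is, by definition, one of the generators of the essential space $X_{\tilde\varphi}=X_\varphi$. For the reverse inclusion: from $p\,\tilde\varphi(S)=\tilde\varphi(S)$ we get $\tilde\varphi(S)x=p(\tilde\varphi(S)x)\in\operatorname{ran}(p)$ for all $S\in A^{**}$ and $x\in X$; since the range of an idempotent operator is closed (it is the kernel of $\id_X-p$), the closed linear span of these vectors, namely $X_{\tilde\varphi}=X_\varphi$, is contained in $\operatorname{ran}(p)$. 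Hence $\operatorname{ran}(p)=X_\varphi$ and $p$ is a projection of $X$ onto $X_\varphi$ with $\|p\|\leq\lambda\|\varphi\|$, which is the desired conclusion. I do not anticipate any serious obstacle: the proof is essentially a synthesis of \autoref{prp:charExtRepr} and \autoref{prp:projIntoBidual}, and the only point requiring care is the identification of $\operatorname{ran}(p)$ with $X_\varphi$ rather than with a larger subspace.
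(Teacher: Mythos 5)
Your proof is correct and follows essentially the same route as the paper's: invoke \autoref{prp:charExtRepr} to get the canonical extension $\tilde\varphi$ with values in $\Bdd(X)$ and the equality $X_\varphi=X_{\tilde\varphi}$, take $p=\tilde\varphi(e)$ for a left unit $e$ of norm at most $\lambda$ via \autoref{pgr:approxUnit} and \autoref{prp:projIntoBidual}, and identify $\operatorname{ran}(p)$ with the essential space. Your explicit justification of the reverse inclusion (that the range of an idempotent is closed, hence contains the closed span of the fixed generators $\tilde\varphi(S)x$) simply spells out what the paper leaves implicit.
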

\begin{proof}
Set $\tilde{\varphi}=\alpha_X\circ\varphi^{**}\colon A^{**}\to\Bdd(X,X^{**})$, as in \autoref{prp:extReprXXdd}.
It follows from \autoref{prp:charExtRepr} that the image of $\tilde{\varphi}$ is contained in $\Bdd(X)$, and that $X_{\varphi}=X_{\widetilde{\varphi}}$.
As noted in \autoref{pgr:approxUnit}, there exists a left unit $e$ for $(A^{**},\rtArensProd)$ with $\|e\|\leq\lambda$.
Set $p=\tilde{\varphi}(e)\in\Bdd(X)$.
Then $p$ is an idempotent and $\|p\|\leq\lambda\|\varphi\|$ by \autoref{prp:projIntoBidual}.
The range of $p$ is contained in the essential space of $\tilde{\varphi}$.
Conversely, it follows from \autoref{prp:projIntoBidual} that $p$ acts as the identity on the essential space of $\tilde{\varphi}$.
Thus, $p$ is the desired projection onto the essential space of $\varphi$.
\end{proof}

\begin{cor}
\label{prp:essSpComplRefl}
Let $A$ be a Banach algebra with a contractive left approximate identity, let $X$ be a reflexive Banach space, and let $\varphi\colon A\to\Bdd(X)$ be a contractive representation.
Then the essential space of $\varphi$ is $1$-complemented in $X$.
\end{cor}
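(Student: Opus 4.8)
The plan is to deduce this directly from \autoref{prp:essSpCompl} by verifying that its hypotheses are met, with $\lambda=1$. First I would observe that a contractive left approximate identity is precisely a bounded left approximate identity with bound $\lambda=1$, and that a contractive representation $\varphi$ satisfies $\|\varphi\|\leq 1$. Second, since $X$ is reflexive, the identity operator $\id_X$ is weakly compact, and hence \emph{every} operator $A\to X$ factors through a reflexive space and is weakly compact; this is exactly the situation recorded in \autoref{pgr:extWCpct}. Thus all hypotheses of \autoref{prp:essSpCompl} hold.

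Applying \autoref{prp:essSpCompl} then yields that the essential space $X_\varphi$ is $\lambda\|\varphi\|$-complemented in $X$, and since $\lambda\|\varphi\|\leq 1\cdot 1=1$, the space $X_\varphi$ is $1$-complemented, as desired. (Strictly speaking, $1$-complemented means the existence of a projection of norm at most $1$; as $X_\varphi$ is a nonzero closed subspace — or trivially $\{0\}$ — such a projection automatically has norm exactly $1$ unless $X_\varphi=\{0\}$, but this is immaterial for the statement.)

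There is essentially no obstacle here: the corollary is a specialization of the theorem. The only point requiring a word of care is the claim that reflexivity of $X$ forces every operator $A\to X$ to be weakly compact, but this is standard — the image of the unit ball of $A$ under any bounded operator into $X$ is a bounded subset of the reflexive space $X$, hence relatively weakly compact by the Banach–Alaoglu theorem together with reflexivity. Alternatively one invokes that $\id_X$ weakly compact implies $f = \id_X \circ f$ weakly compact for every $f$. This is precisely what is already noted at the start of \autoref{pgr:extWCpct}.

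\begin{proof}
Since $X$ is reflexive, the identity operator on $X$ is weakly compact, and therefore every operator $A\to X$ is weakly compact;
see \autoref{pgr:extWCpct}.
A contractive left approximate identity is a bounded left approximate identity with bound $\lambda=1$, and $\|\varphi\|\leq 1$ since $\varphi$ is contractive.
By \autoref{prp:essSpCompl}, the essential space of $\varphi$ is $\lambda\|\varphi\|$-complemented in $X$, and $\lambda\|\varphi\|\leq 1$.
Hence the essential space of $\varphi$ is $1$-complemented in $X$.
\end{proof}
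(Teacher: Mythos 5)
Your proof is correct and follows exactly the paper's own argument: reflexivity of $X$ gives weak compactness of every operator $A\to X$, and then \autoref{prp:essSpCompl} applies with $\lambda=1$ and $\|\varphi\|\leq 1$. Nothing further is needed.
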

\begin{proof}
Since $X$ is reflexive, every operator $A\to X$ is weakly compact.
Thus, we may apply \autoref{prp:essSpCompl}.
\end{proof}

\begin{cor}
\label{prp:essSpReprCa}
Let $A$ be a \ca{}, let $X$ be a Banach space that does not contain an isomorphic copy of $c_0$, and let $\varphi\colon A\to\Bdd(X)$ be a contractive representation.
Then the essential space of $\varphi$ is $1$-complemented in $X$.
\end{cor}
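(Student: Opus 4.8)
The final statement to prove is \autoref{prp:essSpReprCa}: a \ca{} $A$ acting contractively on a Banach space $X$ not containing $c_0$ has $1$-complemented essential space.

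\medskip

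The plan is to deduce this from \autoref{prp:essSpCompl} together with the structure theory of \ca{s}. First I would recall that every \ca{} possesses a \emph{contractive} two-sided approximate identity: an approximate unit built from an increasing net in the positive part of the unit ball. In particular $A$ has a contractive left approximate identity, so the hypothesis on the approximate identity in \autoref{prp:essSpCompl} is met with bound $\lambda = 1$.

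\medskip

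Next I would verify the hypothesis on operators into $X$. By \autoref{pgr:extWCpct} — specifically Pfitzner's theorem that every \ca{} has \Pelc's property $(V)$, combined with the consequence of \cite[Theorem~5]{BesPel58BasesUncondConvSeries} — every operator from a space with property $(V)$ into a space containing no isomorphic copy of $c_0$ is weakly compact. Since $A$ is a \ca{} and $X$ contains no copy of $c_0$ by hypothesis, every operator $A \to X$ is weakly compact. Thus both hypotheses of \autoref{prp:essSpCompl} hold.

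\medskip

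Applying \autoref{prp:essSpCompl} with $\lambda = 1$ and using that $\varphi$ is contractive, i.e.\ $\|\varphi\| \leq 1$, we conclude that the essential space $X_\varphi$ is $\lambda\|\varphi\|$-complemented, hence $1$-complemented, in $X$. Since the two ingredients — the existence of a contractive approximate identity in a \ca{} and the automatic weak compactness of operators out of a \ca{} into a $c_0$-free space — are both already recorded in the excerpt, there is essentially no obstacle here; the only point requiring a word of care is that the bound on the projection norm coming from \autoref{prp:essSpCompl} is $\lambda\|\varphi\|$ rather than $\lambda$, so one must invoke contractivity of $\varphi$ to get the sharp constant $1$.
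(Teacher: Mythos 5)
Your proof is correct and follows essentially the same route as the paper: weak compactness of all operators $A\to X$ via \autoref{pgr:extWCpct}, the contractive approximate identity of a \ca{}, and then \autoref{prp:essSpCompl} with $\lambda=1$ and $\|\varphi\|\leq 1$. If anything, your citation is the more accurate one --- the paper's printed proof points to \autoref{prp:essSpComplRefl}, which assumes $X$ reflexive, whereas the statement actually needed (and the one you invoke) is \autoref{prp:essSpCompl}.
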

\begin{proof}
The assumptions imply that every operator $A\to X$ is weakly compact;
see \autoref{pgr:extWCpct}.
Moreover, every \ca{} has a contractive approximate identity.
Thus, we may apply \autoref{prp:essSpComplRefl} to deduce the statement.
\end{proof}

\section{Representations of multiplier algebras}
\label{sec:reprMult}

Throughout this section, we will use the notion of (left) multiplier algebras; see, for example, \cite{Joh64IntroThyCentralizers} and \cite{Daw10Multipliers}.
It is well-known that a nondegenerate representation of a Banach algebra $A$ with bounded approximate identity can be extended to a representation of the (left) multiplier algebra $M_l(A)$.
For our purposes, we need to show that a nondegenerate \emph{isometric} representation of $A$ can be extended to an unital, \emph{isometric} representation of $M_l(A)$.
This result is certainly also well-known, but we could not locate a reference.
We therefore include an argument.

\begin{thm}
\label{prp:extendNondegRepr}
Let $X$ be a Banach space, let $A$ be a Banach algebra with a contractive, left approximate identity, and let $\varphi\colon A\to\Bdd(X)$ be a contractive, nondegenerate representation.
Then there is a unique contractive, unital representation $\widetilde{\varphi}\colon M_l(A)\to\Bdd(X)$ such that the following diagram commutes:
\[
\xymatrix{
A \ar@{^{(}->}[d] \ar[r]^-{\varphi}
& \Bdd(X). \\
M_l(A) \ar[ur]_{\widetilde{\varphi}}
}
\]
Moreover, for every $a\in A$, $x\in X$ and $T\in M_l(A)$, we have
\begin{align}
\label{prp:extendNondegRepr:Eq1}
\widetilde{\varphi}(T)(\varphi(a)(x))
= \varphi(T(a))(x).
\end{align}
Furthermore, if $\varphi$ is isometric, then so is $\widetilde{\varphi}$.
\end{thm}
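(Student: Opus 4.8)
The plan is to construct $\widetilde{\varphi}$ directly by the formula that equation \eqref{prp:extendNondegRepr:Eq1} forces upon us, and then to check that it is well-defined, contractive, unital, multiplicative, and isometric when $\varphi$ is. First I would fix $T\in M_l(A)$ and try to define an operator $\widetilde{\varphi}(T)$ on the essential space $X_\varphi$ by declaring $\widetilde{\varphi}(T)\big(\varphi(a)x\big)=\varphi(T(a))x$ on elementary tensors and extending linearly; since $\varphi$ is nondegenerate, $X_\varphi=X$, so this is a densely-defined operator on $X$. The first real task is well-definedness and boundedness: given a finite sum $\sum_i \varphi(a_i)x_i=0$, one must show $\sum_i\varphi(T(a_i))x_i=0$, and more quantitatively that $\big\|\sum_i\varphi(T(a_i))x_i\big\|\le\|T\|\,\big\|\sum_i\varphi(a_i)x_i\big\|$. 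This is where I would invoke the contractive left approximate identity $(e_j)$: using that $T$ is a left multiplier, $T(a_i)=\lim_j T(e_j a_i)=\lim_j T(e_j)a_i$ (or, more carefully, $e_j T(a_i)\to T(a_i)$ and $T(e_j)a_i = \, $ the relevant multiplier identity), so that $\sum_i \varphi(T(a_i))x_i=\lim_j \varphi(T(e_j))\big(\sum_i\varphi(a_i)x_i\big)$, and since $\|\varphi(T(e_j))\|\le\|T\|\,\|e_j\|\le\|T\|$, both well-definedness and the norm bound $\|\widetilde{\varphi}(T)\|\le\|T\|$ drop out simultaneously. Hence $\widetilde{\varphi}(T)$ extends to a bounded operator on $X=X_\varphi$ with $\|\widetilde{\varphi}(T)\|\le\|T\|$.

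Next I would verify the algebraic properties. Linearity of $T\mapsto\widetilde{\varphi}(T)$ is immediate from the defining formula. Multiplicativity: for $S,T\in M_l(A)$ and a generator $\varphi(a)x$, compute $\widetilde{\varphi}(S)\widetilde{\varphi}(T)(\varphi(a)x)=\widetilde{\varphi}(S)(\varphi(T(a))x)=\varphi(S(T(a)))x=\varphi((ST)(a))x=\widetilde{\varphi}(ST)(\varphi(a)x)$, using that composition of multipliers $ST$ acts by $a\mapsto S(T(a))$; this extends by density. Unitality: the identity multiplier $\id_A\in M_l(A)$ (which acts as $a\mapsto a$) satisfies $\widetilde{\varphi}(\id_A)(\varphi(a)x)=\varphi(a)x$, so $\widetilde{\varphi}(\id_A)=\id_X$, giving $\|\widetilde{\varphi}\|\le 1$ overall since $\|\widetilde{\varphi}(T)\|\le\|T\|$; and commutativity of the diagram follows because for $b\in A\subseteq M_l(A)$ (viewed as the multiplier $a\mapsto ba$) we get $\widetilde{\varphi}(b)(\varphi(a)x)=\varphi(ba)x=\varphi(b)\varphi(a)x$, so $\widetilde{\varphi}(b)=\varphi(b)$. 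Uniqueness is forced: any contractive representation extending $\varphi$ must satisfy $\widetilde{\varphi}(T)\varphi(a)=\widetilde{\varphi}(T)\widetilde{\varphi}(a)=\widetilde{\varphi}(Ta)=\varphi(T(a))$ (using $Ta\in A$ and commutativity), which is precisely \eqref{prp:extendNondegRepr:Eq1}, and this determines $\widetilde{\varphi}(T)$ on the dense set $X_\varphi$.

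Finally, for the isometric statement: assume $\varphi$ is isometric; we already have $\|\widetilde{\varphi}(T)\|\le\|T\|$, so it remains to show $\|\widetilde{\varphi}(T)\|\ge\|T\|$. The natural route is to use that the norm on $M_l(A)$ is $\|T\|=\sup_{\|a\|\le 1}\|T(a)\|$ and that the contractive left approximate identity lets one realize this supremum: for $a\in A$ with $\|a\|\le 1$, pick $x\in X$ and use that $\|\varphi(a)x\|$ can be made close to $\|a\|\,$ in an appropriate sense — more precisely, for any $\varepsilon>0$ choose $a$ with $\|a\|\le 1$ and $\|T(a)\|>\|T\|-\varepsilon$, then choose $x\in X$ with $\|x\|\le 1$ and $\|\varphi(T(a))x\|>\|T(a)\|-\varepsilon$ (possible since $\varphi(T(a))$ is isometric, hence has norm $\|T(a)\|$, wait — isometric means $\|\varphi(c)\|=\|c\|$). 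Then $\|\varphi(a)x\|\le\|a\|\le 1$, while $\|\widetilde{\varphi}(T)(\varphi(a)x)\|=\|\varphi(T(a))x\|>\|T\|-2\varepsilon$. Since $\varepsilon$ is arbitrary, $\|\widetilde{\varphi}(T)\|\ge\|T\|$, so $\widetilde{\varphi}$ is isometric. I expect the main obstacle to be the careful handling of the left-multiplier identity in the well-definedness step — making sure that $T(a_i)$ is genuinely approximated by $\varphi(T(e_j))$ applied to $\varphi(a_i)x_i$ uniformly over the finite sum — and, in the isometric case, confirming that one can simultaneously make $\|\varphi(a)x\|$ small enough relative to $\|a\|$ while keeping $\|\varphi(T(a))x\|$ close to $\|T(a)\|$; the cleanest fix there is to absorb a factor through the approximate identity, replacing $a$ by $e_j a$ and noting $\|\varphi(e_j a)x\|=\|\varphi(e_j)\varphi(a)x\|$ can be controlled.
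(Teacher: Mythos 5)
Your proof is correct, and it rests on the same key mechanism as the paper's: the left-multiplier identity $T(e_ja)=T(e_j)a$ combined with the contractive left approximate identity, giving $\varphi(T(a))x=\lim_j\varphi(T(e_j))\bigl(\varphi(a)x\bigr)$. The organizational difference is that the paper invokes the Cohen--Hewitt factorization theorem twice: first to write $X=\varphi(A)X$, so that $\widetilde{\varphi}(T)$ is defined on every $x\in X$ as a single elementary product $\varphi(a)y$ rather than on a dense linear span, and second (in its quantitative form, factoring any $x$ with $\|x\|<1$ as $\varphi(a)y$ with $\|a\|\leq 1$, $\|y\|<1$) to obtain contractivity. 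You avoid Cohen--Hewitt entirely by working with finite sums $\sum_i\varphi(a_i)x_i$ on the essential space and extracting both well-definedness and the bound $\|\widetilde{\varphi}(T)\|\leq\|T\|$ from the single estimate $\bigl\|\sum_i\varphi(T(a_i))x_i\bigr\|=\lim_j\bigl\|\varphi(T(e_j))\bigl(\sum_i\varphi(a_i)x_i\bigr)\bigr\|\leq\|T\|\bigl\|\sum_i\varphi(a_i)x_i\bigr\|$, then extending by continuity; this is more elementary, at the cost of bookkeeping over finite sums (which you handle correctly, since the sums are finite and the limit passes through). Your isometry argument is also fine as written --- the closing worry is unnecessary, because you only need $\|\varphi(a)x\|\leq 1$ together with $\|\widetilde{\varphi}(T)(\varphi(a)x)\|>\|T\|-2\varepsilon$ to conclude, not that $\|\varphi(a)x\|$ be close to $1$. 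Finally, you supply an explicit uniqueness argument via $T\circ L_a=L_{T(a)}$, which the paper's statement asserts but whose proof does not spell out; that is a worthwhile addition.
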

\begin{proof}
Let $T\in M_l(A)$. We claim that \eqref{prp:extendNondegRepr:Eq1} defines a map $\widetilde{\varphi}(T)\colon X\to X$.
By the Cohen-Hewitt factorization theorem, we have $X=\varphi(A)X$.
To show that $\widetilde{\varphi}(T)$ is well-defined let $a,b\in A$ and $x,y\in X$ satisfy $\varphi(a)(x)=\varphi(b)(y)$.
Let $(e_j)_{j\in J}$ be a contractive, left approximate identity in $A$.
Using that $T$ is continuous at the first and last step, and that $T$ is a left multiplier at the second step, we obtain that
\begin{align*}
\varphi(T(a))(x)
=\lim_{j\in J} \varphi(T(e_ja))(x)
=\lim_{j\in J} \varphi(T(e_j)a)(x)
=\lim_{j\in J} \varphi(T(e_j))(\varphi(a)(x)).
\end{align*}
Analogously, we have
\[\varphi(T(b))(y)=\lim_{j\in J} \varphi(T(e_j))(\varphi(b)(y)).\]
Thus $\varphi(T(a))(x)=\varphi(T(b))(y)$, and $\widetilde{T}$ is well-defined.

We claim that $\widetilde{\varphi}$ is a unital representation satisfying $\widetilde{\varphi}\circ L=\varphi$.
Given $S,T\in M_l(A)$, $a\in A$ and $x\in X$, we have
\[
\widetilde{\varphi}(ST)(\varphi(a)(x))
= \varphi(ST(\varphi(a)))(x)
= \widetilde{\varphi}(S)(\varphi(T(\varphi(a)))(x))
= \widetilde{\varphi}(S)\widetilde{\varphi}(T)(\varphi(a)(x)),
\]
and thus $\widetilde{\varphi}(ST)=\widetilde{\varphi}(S)\widetilde{\varphi}(T)$.
Similarly one shows that $\widetilde{\varphi}(\id_A)=\id_E$.
Further, 
\[
\widetilde{\varphi}(L_a)(\varphi(b)(x))
= \varphi(L_a(b))(x)
= \varphi(ab)(x)
= \varphi(a)(\varphi(b)(x)),
\]
for all $b\in A$, which implies that $\widetilde{\varphi}(L_a)=\varphi(a)$.
Thus, $\widetilde{\varphi}\circ L=\varphi$, as desired.

We claim that $\widetilde{\varphi}$ is conctractive. Given $T\in M_l(A)$,
we use the Cohen-Hewitt factorization theorem at the second step, to get
\begin{align*}
\|\widetilde{\varphi}(T)\|
&= \sup\big\{ \|\widetilde{\varphi}(T)(x)\| : x\in X, \|x\|<1 \big\} \\
&= \sup\big\{ \|\widetilde{\varphi}(T)(\varphi(a)(x))\| : a\in A, x\in X, \|a\|\leq 1, \|x\|<1 \big\} \\
&= \sup\big\{ \|\varphi(T(a))(x)\| : a\in A, x\in X, \|a\|\leq 1, \|x\|<1 \big\}\\
& \leq\ \|T\|,
\end{align*}
as desired. Finally, when $\varphi$ is isometric, then $\widetilde{\varphi}$ is also isometric:
\begin{align*}
\|T\|
&= \sup\big\{ \|T(a)(x)\| : a\in A, x\in X, \|a\|\leq 1, \|x\|\leq 1 \big\} \\
&= \sup\big\{ \|\varphi(T(a)(x))\| : a\in A, x\in X, \|a\|\leq 1, \|x\|\leq 1 \big\} \\
&= \sup\big\{ \|\widetilde{\varphi}(T)(\varphi(a)x)\| : a\in A, x\in X, \|a\|\leq 1, \|x\|\leq 1 \big\}\\
&\leq \|\widetilde{\varphi}(T)\|.\qedhere
\end{align*}
\end{proof}

\begin{rmk}
\label{rmk:extendNondegRepr}
If $A$ has a contractive, approximate identity, then the analog of \autoref{prp:extendNondegRepr} holds with the multiplier algebra in place of the left multiplier algebra, since in
this case $M(A)$ can be identified with a closed subalgebra of $M_l(A)$.
\end{rmk}

Let $p\in[1,\infty)$.
Recall that 
a Banach space is called an $SL^p$-space ($QL^p$-space, $QSL^p$-space) if it is isometrically isomorphic to a subspace (respectively, a quotient, a quotient of a subspace) of an $L^p$-space.
In \autoref{prp:essSpLp}, we consider classes of reflexive Banach spaces that are closed under passing to $1$-complemented subspaces.
By \cite[Theorem~6]{Tza69CntrProjLp}, every $1$-complemented subspace of an $L^p$-space is again an $L^p$-space.
It follows that for each $p$, the class of $L^p$-spaces (of $QL^p$-spaces, of $SL^p$-spaces, of $QSL^p$-spaces) is closed under passing to $1$-complemented subspaces.
For an overview of classes of Banach spaces with this property, see \cite{NeaRus11ExContrProjPredualJBW}.

Let $\mathcal{E}$ be a class of Banach spaces.
A Banach algebra $A$ is called an \emph{$\mathcal{E}$-operator algebra} if there exists an isometric representation of $A$ on some Banach space in $\mathcal{E}$.
For instance, an $L^p$-operator algebra is a Banach algebra that admits an isometric representation on some $L^p$-space.
Such algebras have been studied in \cite{Phi13arX:LpAnalogsCtz}, \cite{Phi13arX:LpCrProd}, \cite{GarThi15BAlgInvIsoLp}, \cite{GarThi15GpAlgLp}, \cite{GarThi16QuotBAlgLp}.

\begin{thm}
\label{prp:essSpLp}
Let $\mathcal{E}$ be a class of reflexive Banach spaces that is closed under passing to $1$-complemented subspaces,
and let $A$ be an $\mathcal{E}$-operator algebra with a contractive left approximate identity.
Then there is a nondegenerate isometric representation of $A$ on a space in $\mathcal{E}$.
It follows that the left multiplier algebra of $A$ has a unital, isometric representation on a space in $\mathcal{E}$.

If $A$ has a contractive approximate identity, then the same holds for $M(A)$.
\end{thm}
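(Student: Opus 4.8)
The plan is to take an arbitrary isometric $\mathcal{E}$-representation of $A$, cut it down to its essential space (which will again lie in $\mathcal{E}$), observe that the cut-down representation is nondegenerate and isometric, and then feed it into \autoref{prp:extendNondegRepr}. Concretely: since $A$ is an $\mathcal{E}$-operator algebra, fix an isometric representation $\varphi\colon A\to\Bdd(X)$ with $X\in\mathcal{E}$. As $\mathcal{E}$ consists of reflexive spaces, $X$ is reflexive, so every operator $A\to X$ is weakly compact and \autoref{prp:essSpComplRefl} applies (using the contractive left approximate identity of $A$ and the contractive representation $\varphi$): the essential space $X_\varphi$ is $1$-complemented in $X$. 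Since $\mathcal{E}$ is closed under passing to $1$-complemented subspaces, $X_\varphi\in\mathcal{E}$.

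Next I would restrict. Since $\varphi(a)X\subseteq X_\varphi$ for every $a\in A$, each $\varphi(a)$ restricts to an operator on $X_\varphi$, yielding a representation $\varphi_0\colon A\to\Bdd(X_\varphi)$ with $\varphi_0(a)=\varphi(a)|_{X_\varphi}$. It is nondegenerate: $X_\varphi$ is an essential left $A$-module, so the Cohen--Hewitt factorization theorem (available because $A$ has a bounded left approximate identity) gives $X_\varphi=\varphi(A)X_\varphi$, i.e.\ the essential space of $\varphi_0$ is all of $X_\varphi$. The point that needs an argument is that $\varphi_0$ is isometric. One inequality is trivial, $\|\varphi_0(a)\|\leq\|\varphi(a)\|=\|a\|$. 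For the other, fix $a\in A$ and $\varepsilon>0$ and pick $\xi\in X$ with $\|\xi\|\leq1$ and $\|\varphi(a)\xi\|>\|a\|-\varepsilon$; with $(e_j)_j$ the approximate identity, each $\varphi(e_j)\xi$ lies in $X_\varphi$ with norm at most $1$, and $\varphi(a)\varphi(e_j)\xi=\varphi_0(a)\bigl(\varphi(e_j)\xi\bigr)\to\varphi(a)\xi$, so $\|\varphi(a)\xi\|\leq\|\varphi_0(a)\|$; letting $\varepsilon\to0$ gives $\|a\|\leq\|\varphi_0(a)\|$. Hence $\varphi_0$ is a nondegenerate isometric representation of $A$ on $X_\varphi\in\mathcal{E}$, which is the first assertion.

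Finally, I would deduce the multiplier-algebra statements from \autoref{prp:extendNondegRepr}: applied to the contractive, nondegenerate, isometric representation $\varphi_0$, it yields a unique unital representation $M_l(A)\to\Bdd(X_\varphi)$, which is isometric precisely because $\varphi_0$ is; this is the required unital isometric representation of $M_l(A)$ on a space in $\mathcal{E}$. If $A$ has a contractive (two-sided) approximate identity, then $M(A)$ embeds as a closed subalgebra of $M_l(A)$ by \autoref{rmk:extendNondegRepr}, so restricting the above representation to $M(A)$ gives the last claim. The only genuinely non-formal step is the isometry of $\varphi_0$: restricting an operator to a closed subspace can drop its norm, and the role of the approximate identity is exactly to show that the norm of each $\varphi(a)$ is already (essentially) witnessed on $X_\varphi$; the rest is bookkeeping and invoking the cited results.
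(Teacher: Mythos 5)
Your overall route is the same as the paper's: restrict $\varphi$ to $X_\varphi$, use \autoref{prp:essSpComplRefl} to keep $X_\varphi$ inside $\mathcal{E}$, and then invoke \autoref{prp:extendNondegRepr} (and \autoref{rmk:extendNondegRepr} for $M(A)$). You are also right that the isometry of the restriction $\varphi_0$ is the one point that genuinely needs an argument --- the paper's proof simply asserts that the restriction ``is the desired nondegenerate representation''. The problem is that your argument for this step is not valid under the stated hypothesis. You write $\varphi(a)\varphi(e_j)\xi=\varphi_0(a)\bigl(\varphi(e_j)\xi\bigr)\to\varphi(a)\xi$, i.e.\ $\varphi(ae_j)\xi\to\varphi(a)\xi$; this needs $ae_j\to a$, which is the defining property of a \emph{right} approximate identity. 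A left approximate identity only gives $e_ja\to a$, hence $\varphi(e_j)\varphi(a)\xi\to\varphi(a)\xi$ --- the composition in the wrong order. (Your nondegeneracy argument, by contrast, uses the approximate identity on the correct side and is fine.)

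This is not a repairable slip: with only a contractive left approximate identity the claim is false. Take $A=\CC^2$ with norm $\|(x_1,x_2)\|=|x_1|+|x_2|$ and product $(x_1,x_2)(y_1,y_2)=(x_1y_1,x_1y_2)$; then $(1,0)$ is a contractive left unit. The map sending $(x_1,x_2)$ to the operator $\xi\mapsto(x_1\xi_1+x_2\xi_2,0)$ is an isometric representation of $A$ on $\ell_\infty^2$ (a reflexive space, so one may take $\mathcal{E}$ to be the class of all reflexive spaces), its essential space is $\CC\oplus 0$, and the restriction there sends $(x_1,x_2)$ to multiplication by $x_1$, which is not isometric. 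Worse, every representation $\psi$ of this $A$ satisfies $\psi(x)\psi(y)=x_1\psi(y)$, so $\psi$ acts by the scalar $x_1$ on its essential space; hence this $A$ admits no nondegenerate isometric representation on any Banach space, and the statement itself fails under the ``left'' hypothesis. Both the statement and your proof become correct if one assumes a contractive two-sided (or right) approximate identity: then $\varphi(a)\varphi(e_j)\to\varphi(a)$ in norm, each $\varphi(a)\varphi(e_j)$ factors through $X_\varphi$ with $\|\varphi(a)\varphi(e_j)\|\leq\|\varphi_0(a)\|$, and $\|\varphi(a)\|\leq\|\varphi_0(a)\|$ follows. This stronger hypothesis holds in all of the paper's applications ($C^*$-algebras and $L^1(G)$ carry contractive two-sided approximate identities), so the downstream results are unaffected.
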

\begin{proof}
Choose $X\in \mathcal{E}$ and an isometric representation $\varphi\colon A\to\Bdd(X)$.
Let $X_\varphi$ denote the essential space of $\varphi$.
By \autoref{prp:essSpComplRefl}, $X_\varphi$ is $1$-complemented in $X$, so $X_\varphi\in \mathcal{E}$.
The restriction of $\varphi$ to $X_\varphi$ is the desired nondegenerate representation.
Applying \autoref{prp:extendNondegRepr}, we obtain a unital, isometric representation of $M_l(A)$ on $X_\varphi$.
If $A$ has a contractive approximate identity, then $M(A)$ can be identified with a unital, closed subalgebra of $M_l(A)$, which implies the statements for $M(A)$.
\end{proof}

\begin{rmk}
If $A$ is a \ca{}, then \autoref{prp:essSpLp} also holds for the class of $L^1$-spaces;
see \autoref{prp:essSpReprCaLp}.
\end{rmk}

\section{Representations of \texorpdfstring{$C^*$}{C*}-algebras on \texorpdfstring{$L^p$}{Lp}-spaces}
\label{sec:reprCalgLp}

The goal of this section is to show that a \ca{} is necessarily commutative if it has an isometric representation on an $L^p$-spaces for $p\in [1,\infty)\setminus\{2\}$;
see \autoref{prp:CalgLpOpAlg}.
This result is a crucial ingredient to obtain the results in \cite{GarThi16arX:ReprConvLq}, where it is shown that algebras of convolution operators on $L^p(G)$, for a nontrivial locally compact group $G$, are not presentable on an $L^q$-space unless $p=q$, or $p$ and $q$ are conjugate H\"{o}lder exponents, or $p=2$ and $G$ is commutative.

To main ingredients to prove \autoref{prp:CalgLpOpAlg} are:
first, a reduction to the unital case using the results in \autoref{sec:reprMult}; and second, Lamperti's theorem \cite{Lam58IsoLp}.

For $p>1$, the following result also follows from \autoref{prp:essSpLp}.

\begin{lma}
\label{prp:essSpReprCaLp}
Let $p\in[1,\infty)$, and let $A$ be a \ca{} with an isometric representation on an $L^p$-space.
Then the multiplier algebra of $A$ has a unital, isometric representation on an $L^p$-space.
\end{lma}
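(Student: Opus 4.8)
The plan is to reduce to the reflexive case covered by \autoref{prp:essSpLp} when $p>1$, and to handle the remaining case $p=1$ directly. First I would recall that every \ca{} $A$ has a contractive approximate identity, so the hypotheses of \autoref{prp:essSpLp} on $A$ are automatically satisfied. For $p\in(1,\infty)$, the class $\mathcal{E}$ of $L^p$-spaces consists of reflexive Banach spaces and, as noted in the paragraph preceding \autoref{prp:essSpLp}, is closed under passing to $1$-complemented subspaces by \cite[Theorem~6]{Tza69CntrProjLp}. Hence \autoref{prp:essSpLp}, applied with this $\mathcal{E}$, immediately yields a unital, isometric representation of $M(A)$ on an $L^p$-space. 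So the only genuine work is the case $p=1$, where $L^1$-spaces are not reflexive and \autoref{prp:essSpLp} does not apply as stated.

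For $p=1$, I would argue as follows. Start with an isometric representation $\varphi\colon A\to\Bdd(X)$ on an $L^1$-space $X$. Since an $L^1$-space contains no isomorphic copy of $c_0$ (as recorded in \autoref{pgr:extWCpct}), and $A$ is a \ca{}, every operator $A\to X$ is weakly compact by Pfitzner's theorem. Therefore \autoref{prp:essSpReprCa} applies: the essential space $X_\varphi$ is $1$-complemented in $X$ (here I use that $A$ has a contractive approximate identity). By \cite[Theorem~6]{Tza69CntrProjLp} again, a $1$-complemented subspace of an $L^1$-space is itself an $L^1$-space, so $X_\varphi$ is an $L^1$-space. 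The restriction $\varphi|_{X_\varphi}\colon A\to\Bdd(X_\varphi)$ is then a contractive, nondegenerate, isometric representation. Now apply \autoref{prp:extendNondegRepr} (in the multiplier-algebra form of \autoref{rmk:extendNondegRepr}, using that $A$ has a contractive approximate identity so $M(A)$ is a closed subalgebra of $M_l(A)$): this produces a unique contractive, unital representation $\widetilde{\varphi}\colon M(A)\to\Bdd(X_\varphi)$ extending $\varphi|_{X_\varphi}$, and moreover $\widetilde{\varphi}$ is isometric because $\varphi|_{X_\varphi}$ is. Since $X_\varphi$ is an $L^1$-space, this is the desired representation.

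I expect the argument to be essentially a bookkeeping exercise, assembling three earlier results: weak compactness of operators from a \ca{} (\autoref{pgr:extWCpct}), $1$-complementedness of the essential space (\autoref{prp:essSpReprCa}), and extension to the multiplier algebra (\autoref{prp:extendNondegRepr}), with Tzafriri's theorem on $1$-complemented subspaces of $L^p$-spaces as the glue. The only subtlety worth flagging is the $p=1$ case: one must be slightly careful that \autoref{prp:essSpReprCa} is invoked rather than \autoref{prp:essSpComplRefl}, since $L^1$-spaces are not reflexive, and that Tzafriri's result is stated for all $p\in[1,\infty)$ and hence covers $p=1$. Once those points are in place, the proof is immediate. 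For the write-up I would phrase it uniformly: note that for $p>1$ the claim is the cited consequence of \autoref{prp:essSpLp}, and then give the $p=1$ argument, or alternatively give the single argument via \autoref{prp:essSpReprCa} and \autoref{prp:extendNondegRepr} which works for all $p\in[1,\infty)$ at once.
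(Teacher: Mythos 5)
Your proposal is correct and takes essentially the same route as the paper: the paper's proof is precisely your uniform argument for all $p\in[1,\infty)$ (no copy of $c_0$ in an $L^p$-space, hence \autoref{prp:essSpReprCa} gives a $1$-complemented essential space, which is an $L^p$-space by Tzafriri, followed by the extension to $M(A)$ as in \autoref{prp:essSpLp} via \autoref{prp:extendNondegRepr}), and the paper separately remarks that the case $p>1$ also follows directly from \autoref{prp:essSpLp}. Your flagged subtlety about using \autoref{prp:essSpReprCa} rather than \autoref{prp:essSpComplRefl} when $p=1$ is exactly the point the paper's proof addresses.
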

\begin{proof}
Choose an $L^p$-space $X$ and an isometric representation $\varphi\colon A\to\Bdd(X)$.
As observed in \autoref{pgr:extWCpct}, $X$ does not contain an isomorphic copy of $c_0$.
It follows from \autoref{prp:essSpReprCa}, that $X_\varphi$ is $1$-complemented in $X$.
We may then argue as in the proof of \autoref{prp:essSpLp} to deduce that $X_\varphi$ is an $L^p$-space and that the restriction of $\varphi$ to $X_\varphi$ extends
to a unital isometric representation of $M(A)$ on $X_\varphi$.
\end{proof}

Given a unital Banach algebra $A$, recall that $a\in A$ is said to be \emph{hermitian} if $\|\exp(ita)\|=1$ for every $t\in\RR$.
(Equivalently, $\|\exp(ita)\|\leq1$ for every $t\in\RR$.)
An element of a unital \ca{} is hermitian if and only if it is self-adjoint.

Let $A$ and $B$ be unital Banach algebras, let $\varphi\colon A\to B$ be a multiplicative operator, and let $a\in A$ be hermitian.
If $\varphi$ is unital and contractive, then $\varphi(a)$ is hermitian in $B$.
Indeed, using that $\varphi$ is unital, we have $\exp(it\varphi(a))=\varphi(\exp(ita))$ for every $t$.
Further, using that $\varphi$ is contractive, we obtain that
\[
\| \exp(it\varphi(a)) \| = \| \varphi(\exp(ita))\| \leq \| \exp(ita) \| =1,
\]
as desired.
If $\varphi$ is not unital or not contractive, then $\varphi(a)$ need not be hermitian.


\begin{lma}
\label{prp:hermOpLp}
Let $p\in [1,\infty)\setminus\{2\}$, let $(X,\mu)$ be a complete, $\sigma$-finite measure space, and let $a\in\Bdd(L^p(\mu))$.
Then $a$ is hermitian if and only if $a$ is the multiplication operator corresponding to some bounded, measurable function $X\to\RR$.
In particular, all hermitian operators on $L^p(\mu)$ commute.
\end{lma}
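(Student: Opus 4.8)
The plan is to reduce the statement to Lamperti's theorem, which describes the isometries of $L^p(\mu)$ for $p \neq 2$. First I would observe that $a$ is hermitian precisely when $\exp(ita)$ is an isometry of $L^p(\mu)$ for every $t \in \RR$ (using the standard fact that a surjective operator $u$ with $\|u\| \leq 1$ and $\|u^{-1}\| \leq 1$ is an isometry, and that each $\exp(ita)$ is invertible with inverse $\exp(-ita)$). For the direction asserting that multiplication operators by real-valued bounded functions are hermitian, I would simply compute $\exp(ita) = m_{\exp(ith)}$, which is an isometry since $|\exp(ith)| = 1$ pointwise.

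For the converse, the key step is to invoke Lamperti's theorem \cite{Lam58IsoLp}: every isometry $v$ of $L^p(\mu)$, $p \neq 2$, has the form $v(f) = h \cdot (f \circ T)$ for a measurable function $h$ and a set transformation $T$ (a ``weighted composition operator''); in particular $v$ preserves disjointness of supports, meaning $fg = 0$ implies $v(f)v(g) = 0$. So each $\exp(ita)$ is disjointness-preserving. Differentiating at $t = 0$, or rather exploiting that disjointness-preservation is inherited by the generator, one concludes that $a$ itself is disjointness-preserving. The standard structure theory of disjointness-preserving (``separating'') operators on $L^p$-spaces then forces $a$ to be a weighted composition operator $a(f) = h \cdot (f \circ T)$; combining this with the requirement that $\exp(ita)$ be an \emph{isometry} for all $t$ (not merely disjointness-preserving) pins down $T$ to be (essentially) the identity and $h$ to be a bounded real-valued function, so that $a = m_h$ is a multiplication operator. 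The final clause, that hermitian operators commute, is then immediate since multiplication operators commute with one another.

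I expect the main obstacle to be the passage from ``$\exp(ita)$ is disjointness-preserving for all $t$'' to ``$a$ is a multiplication operator.'' One has to be careful that Lamperti's composition part $T$ must be trivial: if $T$ were a nontrivial measure-preserving transformation, then $\{\exp(ita)\}_t$ would be a one-parameter group whose composition parts form a one-parameter group of set transformations, and the requirement that the whole family consist of isometries generated by a \emph{bounded} operator $a$ rules this out (a bounded generator cannot produce a nontrivial flow on the measure space — e.g., by looking at the action on characteristic functions of small sets and using boundedness of $a$). Making this rigorous may require either a direct argument with the strong continuity and boundedness of $t \mapsto \exp(ita)$, or a citation to the literature on hermitian operators on $L^p$-spaces (work of Berkson, Lumer, Tam, and others), where this description of hermitian operators is classical. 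Given the phrasing ``In particular,'' the cleanest writeup is probably to cite Lamperti for the structure of isometries and then a known result identifying hermitian operators on $L^p$, $p \neq 2$, as real multiplication operators, supplying only the short argument for why such multiplication operators are indeed hermitian.
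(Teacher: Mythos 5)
Your first direction and your reduction of hermiticity to ``$\exp(ita)\in\Isom(L^p(\mu))$ for all $t$'' match the paper exactly. For the converse, however, you take a genuinely different route, and it has a gap precisely at the step you flag as the main obstacle. The paper does \emph{not} argue via disjointness preservation of the generator. Instead it first rescales so that $\|a\|\leq\tfrac{\pi}{2}$, observes that $t\mapsto u_t=\exp(ita)$ is a \emph{norm-continuous} path in $\Isom(L^p(\mu))$ starting at the identity, and invokes the rigidity of the Lamperti decomposition along norm-continuous paths of isometries (Lemma~6.22 of \cite{Phi13arX:LpAnalogsCtz}, applied to the form of Lamperti's theorem in \cite{GarThi15BAlgInvIsoLp}) to conclude that the spatial part of $u_1$ equals that of $u_0=\id$; hence $u_1=\exp(ia)$ is multiplication by a unimodular function $h$. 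The rescaling guarantees $\sigma(u_1)\subseteq\{z\in S^1:\re(z)\geq0\}$, so a holomorphic logarithm applies and $ia=\log(u_1)$ is multiplication by $\log\circ h$, whence $a$ is multiplication by the bounded real function $-i\log\circ h$. This is exactly the ``a bounded generator cannot move the underlying measure space'' principle you gesture at, but it is extracted from norm continuity of the group rather than by differentiating, and the generator is then recovered by functional calculus rather than by a structure theorem for disjointness-preserving operators.

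By contrast, your sketch leaves two real holes. First, ``differentiating'' the relation $(u_tf)(u_tg)=0$ is delicate: the products live in $L^{p/2}(\mu)$, which for $p<2$ is not locally convex, and what the first-order term actually yields is the locality identity $(af)g+f(ag)=0$ for disjointly supported $f,g$ --- from which one can deduce $\operatorname{supp}(af)\subseteq\operatorname{supp}(f)$ by a support argument --- not that $a$ itself is disjointness preserving. Second, the passage from a bounded local (band-preserving) operator to a multiplication operator is a separate fact not contained in Lamperti's theorem \cite{Lam58IsoLp}, which describes \emph{isometries} only; and one still must argue that the multiplier is real-valued. All of this can be filled in --- it is essentially the classical Lumer--Tam identification of hermitian operators on $L^p$ --- so if you are content to cite that literature your proof closes; but for a self-contained argument the paper's rescale-and-take-a-logarithm device is the cleaner way to avoid the differentiation step altogether.
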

\begin{proof}
Assume that $a$ is given by multiplication with a bounded, measurable function $h\colon X\to\RR$.
We need to show that $\|\exp(ita)\|\leq 1$ for every $t\in\RR$.
Let $t\in\RR$.
Then the operator $\exp(ita)$ is given by multiplication with the function $\exp(ith)$.
For each $x\in X$, we have $\exp(ith)(x)\in S^1$, and therefore $\|\exp(ith)\|_\infty\leq 1$.
It follows that $\|\exp(ita)\|\leq 1$, as desired.

To show the converse implication, assume that $a$ is hermitian.
By rescaling, if necessary, we may assume that $\|a\|\leq\tfrac{\pi}{2}$.
Let $\Isom(L^p(\mu))$ denote the set of surjective isometric operators $L^p(\mu)\to L^p(\mu)$.
For $t\in\RR$, set $u_t=\exp(ita)$.
Since $a$ is hermitian, we have $\|u_t\|\leq 1$ for every $t$.
Moreover, we have $u_tu_{-t}=u_{-t}u_t=\id$, which implies that $u_t$ belongs to $\Isom(L^p(\mu))$ for every $t$.
Moreover, the resulting map $u\colon [0,1]\to\Isom(L^p(\mu))$ is norm-continuous.

By Lamperti's theorem (in the form given in Theorem~4.4 in~\cite{GarThi15BAlgInvIsoLp};
see also \cite{Lam58IsoLp} for the original statement), for every $v\in\Isom(L^p(\mu))$ there exists a measurable function $h_v\colon X\to S^1$ and a measure class preserving automorphism $T_v\colon X\to X$
(see Definition~4.1 in \cite{GarThi15BAlgInvIsoLp}) such that
\begin{eqnarray*}
(v\xi)(x) =h_v(x)\left(\frac{d(\mu\circ T_v^{-1})}{d\mu}(x)\right)^{\frac{1}{p}}\xi(T_v^{-1}(x))
\end{eqnarray*}
for all $\xi\in L^p(\mu)$ and $\mu$-almost every $x\in X$.
(The transformation $T_v$ is called the \emph{spatial realization} of $v$ in \cite{Phi13arX:LpAnalogsCtz}.)

We may apply Lamperti's theorem to $u_t$, for each $t\in[0,1]$.
By Lemma~6.22 in \cite{Phi13arX:LpAnalogsCtz}, the spatial realizations of $u_0$ and $u_1$ agree.
Since $u_0$ is the identity map on $L^p(\mu)$, its spatial realization is the identity on $X$.
Thus, there exists a measurable function $h\colon X\to S^1$ such that $u_1$ is given by multiplication with $h$.

Set $T= \{it : t\in  [-\tfrac{\pi}{2},\tfrac{\pi}{2}]\}$, and set $P= \{z\in S^1 : \re(z)\geq 0\}$.
The restriction of the exponential map to $T$ is a bijection onto $P$.
We let $\log\colon P\to T$ denote the inverse map, which is contained in a branch of the logarithm and is therefore analytic on a neighborhood of $P$.

Let $\sigma(a)$ denote the spectrum of $a$.
Since $\|a\|\leq\tfrac{\pi}{2}$, the spectrum of $ia$ is contained in $T$, and consequently $\sigma(u_1)\subseteq P$.
Thus, we may apply (analytic) functional calculus to $u_1$ and obtain that $ia=\log(u_1)$.
Recall that $u_1$ is given by multiplication with the function $h\colon X\to S^1$.
After changing $h$ on a null-set, we may assume that $h$ takes image in $P$.
It follows that $ia$ is given by multiplication by the function $\log(h)$, which takes values in $T$.
This implies that $a$ is given by multiplication by $-i\log(h)$, which is a bounded, measurable real-valued function, as desired.
\end{proof}

\begin{thm}
\label{prp:CalgLpOpAlg}
Let $A$ be a \ca{} and let $p\in [1,\infty)\setminus\{2\}$.
Then $A$ can be isometrically represented on an $L^p$-space if and only if $A$ is commutative.
\end{thm}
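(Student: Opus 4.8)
The plan is to prove both implications, with the forward direction being the substantive one.

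For the easy direction, if $A$ is commutative then by Gelfand theory $A \cong C_0(\Omega)$ for a locally compact Hausdorff space $\Omega$, and one realizes $C_0(\Omega)$ isometrically as multiplication operators on $L^p(\Omega,\mu)$ for a suitable (strictly positive, $\sigma$-finite if $\Omega$ is $\sigma$-compact; in general one takes a disjoint union of such pieces or passes to a localizable measure) measure $\mu$ on $\Omega$. Multiplication by $f\in C_0(\Omega)$ has operator norm $\|f\|_\infty$, so this representation is isometric. This handles ``$\Leftarrow$''.

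For the forward direction, suppose $\varphi\colon A\to\Bdd(L^p(\mu))$ is an isometric representation of a \ca{} $A$ on an $L^p$-space. First I would reduce to the unital case: by \autoref{prp:essSpReprCaLp}, the multiplier algebra $M(A)$ admits a unital isometric representation $\psi\colon M(A)\to\Bdd(Y)$ on some $L^p$-space $Y$ (necessarily, by the structure theory used there, one may take $Y = L^p(\nu)$ for a complete $\sigma$-finite, or localizable, measure $\nu$; a standard further reduction via the fact that any $L^p$-space is an $\ell^p$-sum of $L^p$ of $\sigma$-finite spaces lets one assume $\nu$ is complete and $\sigma$-finite). Since $A$ embeds isometrically into $M(A)$, it suffices to show $M(A)$ is commutative, hence it suffices to treat a unital \ca{} $B$ with a unital isometric representation $\psi\colon B\to\Bdd(L^p(\mu))$. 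Now take any self-adjoint $a\in B$. Self-adjoint elements of a \ca{} are hermitian, and since $\psi$ is unital and contractive (indeed isometric), the element $\psi(a)$ is hermitian in $\Bdd(L^p(\mu))$, by the remark preceding \autoref{prp:hermOpLp}. By \autoref{prp:hermOpLp} (this is where $p\neq 2$ enters, via Lamperti's theorem), $\psi(a)$ is a multiplication operator by a bounded real measurable function; in particular, for self-adjoint $a,b\in B$ the operators $\psi(a)$ and $\psi(b)$ are both multiplication operators and therefore commute. Since every element of $B$ is a linear combination of two self-adjoint elements and $\psi$ is linear and injective, it follows that $ab=ba$ for all $a,b\in B$, i.e.\ $B$ is commutative.

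The main obstacle — and the point requiring the most care — is the measure-theoretic bookkeeping in the reduction to a $\sigma$-finite (or at least localizable/complete) measure space, since \autoref{prp:hermOpLp} is stated only for complete $\sigma$-finite $(X,\mu)$: one must check that ``$L^p$-space'' in the hypothesis can be replaced, without loss of generality, by $L^p$ of such a space, using that isometric $L^p$-representability is inherited along the relevant decompositions and that a hermitian operator restricts appropriately to the $\sigma$-finite blocks. Everything else — the reduction to the unital case via \autoref{prp:essSpReprCaLp}, the transfer of hermitianness through a unital contractive homomorphism, and the final linear-algebra step — is routine given the results already established.
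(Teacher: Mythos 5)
Your overall architecture matches the paper's: the same easy direction via Gelfand theory and multiplication operators, the same reduction to the unital case through \autoref{prp:essSpReprCaLp}, the same transfer of hermitianness through a unital contractive homomorphism, and the same appeal to \autoref{prp:hermOpLp} to conclude that images of self-adjoint elements are commuting multiplication operators. The one step you flag as ``the main obstacle'' is, however, a genuine gap rather than bookkeeping, and it is precisely the point where the paper does something different. Your proposed route --- decompose a general $L^p$-space as an $\ell^p$-sum of $L^p$'s of $\sigma$-finite spaces and ``restrict the hermitian operator to the blocks'' --- is not justified: there is no a priori reason that a bounded operator (or even a one-parameter group of isometries) on such a direct sum preserves the summands, and establishing that it does in the non-$\sigma$-finite setting amounts to reproving a Lamperti-type theorem for general (localizable) measure spaces, which is exactly the technical difficulty you are trying to avoid. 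Note also that \autoref{prp:essSpReprCaLp} only produces \emph{some} $L^p$-space (a $1$-complemented subspace of the original one, hence an $L^p$-space by Tzafriri), with no $\sigma$-finiteness attached.

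The paper's fix is a separability reduction that sidesteps the measure theory entirely: to show that two self-adjoint elements $a,b$ commute, pass to the separable unital sub-\ca{} they generate; a separable unital $L^p$-operator algebra admits a nondegenerate (hence unital) isometric representation on a \emph{separable} $L^p$-space by \cite[Proposition~1.25]{Phi13arX:LpCrProd}, and every separable $L^p$-space is isometrically isomorphic to $L^p(\mu)$ for a complete $\sigma$-finite $\mu$, so \autoref{prp:hermOpLp} applies directly. You should either adopt this reduction or supply a proof of the block-restriction claim; as written, the argument does not close.
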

\begin{proof}
Assume that $A$ is commutative.
By the Gelfand representation theorem, there exists a locally compact Hausdorff space $X$ such that $A\cong C_0(X)$.
By choosing a suitable family of Borel probability measures on $X$, it is easy to construct a faithful representation of $C_0(X)$
on an $L^p$-space, via a direct sum of representations by multiplication operators. We omit the details.

To show the converse, assume that $A$ has an isometric representation on an $L^p$-space.
By \autoref{prp:essSpReprCaLp}, there is a unital, isometric representation of the multiplier algebra $M(A)$ on some $L^p$-space.
Since $A$ is commutative if (and only if) $M(A)$ is commutative, we can assume that $A$ and its representation are unital.
Since any element in a \ca{} is a linear combination of two self-adjoint elements, it is enough to show that self-adjoint elements in $A$ commute.

Let $a,b\in A_{\mathrm{sa}}$.
Since the unital sub-\ca{} generated by $a$ and $b$, is a separable unital \ca{}, we may assume that $A$ itself is separable.
In this case, by \cite[Proposition~1.25]{Phi13arX:LpCrProd}, there exist a separable $L^p$-space $E$ and a nondegenerate (and hence automatically unital) isometric representation $\varphi\colon A\to \Bdd(E)$.
Since every separable $L^p$-space is isometrically isomorphic to $L^p(\mu)$ for some complete, $\sigma$-finite measure space $\mu$, and since $\varphi(a)$ and $\varphi(b)$ are hermitian, we deduce from \autoref{prp:hermOpLp} that $\varphi(a)$ and $\varphi(b)$ commute.
Since $\varphi$ is injective, the result follows.
\end{proof}

One important ingredient in the proof of \autoref{prp:CalgLpOpAlg} was that all hermitian operators on $L^p(\mu)$ commute, for a complete, $\sigma$-finite measure space $\mu$ and $p\in[1,\infty)\setminus\{2\}$.
We remark that there are other Banach spaces $X$ such that all hermitian operators on $X$ commute.
There are even Banach spaces $X$ such that every hermitian operator on $X$ is a multiple of the identity operator;
see \cite[Section~3]{BerSou74HermOpsOnBSp}.
For such $X$, there is no isometric, nondegenerate representation of any nonzero \ca{} $A\neq\CC$ on $X$.

\begin{rmk}
\label{rmk:CalgLpOpAlg}
\autoref{prp:CalgLpOpAlg} should be contrasted with the fact that \emph{every} $C^*$-algebra can be isometrically represented on a noncommutative $L^p$-space
To prove this, it is enough to show it for $\Bdd(\mathcal{H})$ for  a Hilbert space $H$.
Given $p\in[1,\infty)$, consider the associated noncommutative $L^p$-space $\mathcal{S}_p(H)$ of Schatten-$p$ class operators.
Using that $\mathcal{S}_p(H)$ is a (nonclosed) ideal in $\Bdd(H)$, we may define $\varphi\colon\Bdd(H)\to\Bdd(\mathcal{S}_p(H))$ by $\varphi(a)b=ab$, for $a\in\Bdd(H)$ and $b\in\mathcal{S}_p(H)$.
It is well-known that $\varphi$ is an isometric representation.
\end{rmk}

\section{Universal completions for representations on \texorpdfstring{$L^p$}{Lp}-spaces}
\label{sec:univRepr}

\begin{pgr}
Let $A$ be a Banach algebra, and let $\mathcal{E}$ be a class of Banach spaces.
We let $\Rep_{\mathcal{E}}(A)$ denote the class of all non-degenerate, contractive representations $A\to\Bdd(X)$, for $X$ ranging over Banach spaces in $\mathcal{E}$.
Given $a\in A$, we set
\[
\|a\|_{\mathcal{E}} = \sup\big\{ \|\varphi(a)\| : \varphi\in\Rep_{\mathcal{E}}(A) \big\}.
\]
We let $F_\mathcal{E}(A)$ denote the Hausdorff completion of $A$ with respect to the semi-norm $\|\cdot\|_{\mathcal{E}}$. 
Multiplication on $A$ extends to $F_\mathcal{E}(A)$ and turns it into a Banach algebra.
\end{pgr}

In the next result, we show that in the definition of $F_{\mathcal{E}}(A)$, one can sometimes consider \emph{all} representations on $\mathcal{E}$, and not just the nondegenerate ones.

\begin{thm}
Let $\mathcal{E}$ be a class of reflexive Banach spaces that is closed under passing to $1$-complemented subspaces, 
and let $A$ be a Banach algebra with a contractive left approximate identity, and let $a\in A$.
Then
\[
\|a\|_{\mathcal{E}} = \sup\big\{ \|\varphi(a)\| : \varphi\colon A\to\Bdd(X) \text{ contractive representation}, X\in\mathcal{E} \big\}.
\]
\end{thm}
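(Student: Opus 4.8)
The inequality $\|a\|_{\mathcal{E}}\le\sup\{\|\varphi(a)\|:\varphi\text{ contractive representation},X\in\mathcal{E}\}$ is immediate, since every nondegenerate contractive representation on a space in $\mathcal{E}$ is in particular a contractive representation on a space in $\mathcal{E}$. So the plan is to prove the reverse inequality: given an arbitrary contractive representation $\varphi\colon A\to\Bdd(X)$ with $X\in\mathcal{E}$, I want to produce a nondegenerate contractive representation $\psi$ on a space in $\mathcal{E}$ with $\|\varphi(a)\|\le\|\psi(a)\|$ for the fixed $a$.

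The natural candidate is the restriction of $\varphi$ to its essential space $X_\varphi$. First I would invoke \autoref{prp:essSpComplRefl}: since $A$ has a contractive left approximate identity and $X$ is reflexive, $X_\varphi$ is $1$-complemented in $X$, hence $X_\varphi\in\mathcal{E}$ because $\mathcal{E}$ is closed under passing to $1$-complemented subspaces. Let $p\in\Bdd(X)$ be the norm-one projection onto $X_\varphi$ constructed in the proof of \autoref{prp:essSpCompl} (it is $\tilde\varphi(e)$ for a left unit $e$ of $(A^{**},\rtArensProd)$ of norm one), and note each $\varphi(b)$ maps $X$ into $X_\varphi$ and restricts to the identity nowhere is needed — what matters is that $X_\varphi$ is invariant under $\varphi(b)$ for all $b\in A$, so $\psi(b):=\varphi(b)|_{X_\varphi}\in\Bdd(X_\varphi)$ defines a contractive representation $\psi\colon A\to\Bdd(X_\varphi)$, which is nondegenerate by construction. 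Thus $\psi\in\Rep_{\mathcal{E}}(A)$.

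It remains to check $\|\varphi(a)\|\le\|\psi(a)\|$. The key point is the factorization $\varphi(a)=\varphi(a)\,p$: indeed $p$ acts as the identity on $X_\varphi\supseteq\varphi(a)(X)$, so $p$ is a left unit for $\varphi(a)$ up to composition — more precisely, for $x\in X$ one has $\varphi(a)x\in X_\varphi$ and, writing $a=a\ltArensProd e$ or directly using that $\tilde\varphi(S)p=\tilde\varphi(S)$ from \autoref{prp:projIntoBidual} one might think we need $p\varphi(a)=\varphi(a)$; but what we actually get cleanly is $\varphi(a)=\varphi(a)p$ if $p$ is a \emph{right} unit for the relevant orbit, which requires a bit of care. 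The cleaner route: since $\varphi(a)$ has image in $X_\varphi$ and $p$ is a projection onto $X_\varphi$ with $\|p\|=1$, we have $\|\varphi(a)x\|=\|p\varphi(a)x\|$ only if $\varphi(a)x$ were already there — which it is — but that gives nothing. Instead use that $a$ factors: by Cohen--Hewitt there exist $b,c\in A$ with $a=bc$ (using the bounded left approximate identity), so $\varphi(a)x=\varphi(b)\varphi(c)x=\psi(b)\big(\varphi(c)x\big)$, and $\varphi(c)x=p\varphi(c)x\in X_\varphi$... this still needs $\varphi(c)x\in X_\varphi$, which holds. Hence $\|\varphi(a)x\|=\|\psi(b)(\varphi(c)x)\|$, but that bounds by $\|\psi(b)\|\|\varphi(c)x\|$, not by $\|\psi(a)\|$. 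The honest fix: write $a=ba'$ with $b$ a factor and observe $\varphi(a)x=\psi(a)\cdot$? — no.

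Let me restate the clean argument for the last step. For every $x$ in the unit ball of $X$, $\varphi(a)x\in X_\varphi$, and since $p$ is the identity on $X_\varphi$ while $\|p\|\le 1$, for any $b\in A$ we get $\|\varphi(b)x\|$ is controlled only by $\varphi$. So the correct observation is simply: $\|\varphi(a)\|=\sup_{\|x\|\le1}\|\varphi(a)x\|$ and, because the range lies in $X_\varphi$ and $\mathcal{E}$-membership of $X_\varphi$ is all we use, I claim $\varphi(a)=\iota\circ\psi(a)\circ p$ where $\iota\colon X_\varphi\hookrightarrow X$: this holds iff $\varphi(a)(1-p)=0$, i.e. iff $p$ is a right unit for $\varphi(a)$. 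Now $p=\tilde\varphi(e)$ with $e$ a left unit of $(A^{**},\rtArensProd)$; applying \autoref{prp:projIntoBidual} we only get $p\tilde\varphi(S)=\tilde\varphi(S)$, the wrong side. To get the right side I need $e$ to also be a \emph{right} unit on $\kappa_A(A)$, which it need not be. The resolution is to use Cohen--Hewitt directly on $a$: pick $b\in A$, $a_0\in A$ with $a=a_0 b$; then for $\|x\|\le1$, $\varphi(a)x=\varphi(a_0)(\varphi(b)x)$ with $\varphi(b)x\in X_\varphi$, so $\|\varphi(a)x\|=\|\psi(a_0)(\varphi(b)x)\|\le\|\psi(a_0)\|\,\|\varphi(b)x\|$. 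Iterating will not close the loop cleanly.

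\smallskip

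The approach that does work, and which I would write up: observe that the supremum over \emph{all} contractive representations equals the supremum over nondegenerate ones because of the following. Given a contractive representation $\varphi\colon A\to\Bdd(X)$, $X\in\mathcal{E}$, pass to $X_\varphi$, which lies in $\mathcal{E}$ by \autoref{prp:essSpComplRefl} and the closure hypothesis. The restricted representation $\varphi_0:=\varphi(\cdot)|_{X_\varphi}\colon A\to\Bdd(X_\varphi)$ is nondegenerate and contractive. Finally $\|\varphi(a)\|=\|\varphi_0(a)\|$: the inequality $\ge$ is clear since restricting to an invariant subspace cannot increase the norm; for $\le$, note every vector $\varphi(a)x$ lies in $X_\varphi$ and, writing $a=bc$ via Cohen--Hewitt (legitimate since $A$ has a bounded left approximate identity) one has for $\|x\| \le 1$ that $\|\varphi(a)x\|=\|\varphi(b)\varphi(c)x\|=\|\varphi_0(b)(\varphi(c)x)\|$; now $\varphi(c)x\in X_\varphi$ has norm $\le\|\varphi(c)\|\|x\|$, so $\|\varphi(a)\|\le\|\varphi_0(b)\|\,\|\varphi(c)\|$. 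To convert this into a bound by $\|\varphi_0(a)\|$ alone, apply Cohen--Hewitt inside the subalgebra: since $X_\varphi=\varphi_0(A)X_\varphi$ and $a=bc$, for $y=\varphi(c)x\in X_\varphi$ we have $\varphi_0(b)y=\varphi_0(b)\varphi_0(c')y'$ whenever $y=\varphi_0(c')y'$; but more simply, $\varphi_0(a)\varphi_0(d)=\varphi_0(ad)$ and as $d$ runs over a bounded approximate identity $\varphi_0(ad)\to\varphi_0(a)$ in norm on $X_\varphi$ by Cohen--Hewitt applied to the nondegenerate module $X_\varphi$, giving $\|\varphi_0(a)\|=\sup_d\|\varphi_0(a)\varphi_0(d)|_{X_\varphi}\|$; since $\varphi_0(d)$ on $X_\varphi$ extends $\varphi(d)$ on $X$ restricted, and $\varphi(a)=\lim_d\varphi(ad)$ in norm as operators on $X$ (again Cohen--Hewitt, as $a\in\overline{aA}$), we conclude $\|\varphi(a)\|=\lim_d\|\varphi(ad)\|=\lim_d\|\varphi_0(ad)\|\le\|\varphi_0(a)\|$. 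Hence $\|\varphi(a)\|=\|\varphi_0(a)\|\le\|a\|_{\mathcal{E}}$, completing the proof.

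\smallskip

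The expected main obstacle is precisely the last equality $\|\varphi(a)\|=\|\varphi_0(a)\|$: one must verify that restricting $\varphi(a)$ to the (invariant, complemented) essential space does not decrease its operator norm, and the clean way to see this is via Cohen--Hewitt factorization of $a$ itself (using the bounded left approximate identity), which forces $\varphi(a)$ to be a norm-limit of operators that manifestly factor through $X_\varphi$.
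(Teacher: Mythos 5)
Your overall route coincides with the paper's: restrict $\varphi$ to its essential space $X_\varphi$, use \autoref{prp:essSpComplRefl} and the closure hypothesis on $\mathcal{E}$ to get $X_\varphi\in\mathcal{E}$, observe that $\varphi_0=\varphi(\cdot)|_{X_\varphi}$ is a nondegenerate contractive representation, and reduce everything to the single claim $\|\varphi(a)\|\leq\|\varphi_0(a)\|$. You are also right that this is the only nontrivial point (the paper merely asserts ``$\|\varphi(b)\|=\|\varphi_0(b)\|$'' without justification), and your diagnosis of why the naive attempts fail is accurate: \autoref{prp:projIntoBidual} makes $p=\tilde\varphi(e)$ a \emph{left} unit for the range of $\tilde\varphi$, so one cannot conclude $\varphi(a)p=\varphi(a)$, and a factorization $a=bc$ only yields $\|\varphi(a)\|\leq\|\varphi_0(b)\|\,\|\varphi(c)\|$.

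The step you finally settle on, however, has a genuine gap: you invoke ``$\varphi(a)=\lim_d\varphi(ad)$ in norm (again Cohen--Hewitt, as $a\in\overline{aA}$)''. A \emph{left} approximate identity gives $e_ja\to a$, i.e.\ $a\in\overline{Aa}$, and Cohen--Hewitt applied to $A$ as a left module over itself produces $a=bc$ with $c\in\overline{Aa}$ --- again the wrong side; neither gives $a\in\overline{aA}$. In fact the inequality $\|\varphi(a)\|\leq\|\varphi_0(a)\|$ can fail under the stated hypotheses: let $A=\linSpan\{E_{11},E_{12}\}\subseteq\Bdd(\ell_p^2)$ with the operator norm, where $E_{ij}$ are the matrix units. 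Then $E_{11}$ is a contractive left unit, the inclusion $\varphi$ is an isometric representation on the $L^p$-space $\ell_p^2$, the essential space is $\CC e_1$, and $E_{12}A=0$, so $\varphi_0(E_{12})=0$ while $\|\varphi(E_{12})\|=1$; since every nondegenerate representation annihilates $E_{12}$, the displayed equality of the theorem itself fails for this $A$. So the gap in your argument cannot be repaired without strengthening the hypothesis: one needs $a\in\overline{aA}$, e.g.\ a contractive \emph{two-sided} (or right) approximate identity $(e_j)$, which does hold in all of the paper's applications ($L^1(G)$, $C^*$-algebras). Under that hypothesis your closing limiting argument is correct and closes the proof: $\|\varphi(a)-\varphi(ae_j)\|\leq\|a-ae_j\|\to0$, while $\|\varphi(ae_j)x\|=\|\varphi_0(a)(\varphi(e_j)x)\|\leq\|\varphi_0(a)\|\,\|x\|$ because $\varphi(e_j)$ maps the unit ball of $X$ into the unit ball of $X_\varphi$, whence $\|\varphi(a)\|\leq\|\varphi_0(a)\|$. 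State that extra hypothesis explicitly, and strip out the recorded false starts when you write this up.
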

\begin{proof}
Let $K$ denote the supremum of $\|\varphi(a)\|$ for all contractive representations $\varphi\colon A\to\Bdd(X)$ with $X\in\mathcal{E}$.
We clearly have $\|a\|_{\mathcal{E}}\leq K$.
To show the converse inequality, let $X\in\mathcal{E}$ and let $\varphi\colon A\to\Bdd(X)$ be a contractive representation.
We need to verify that $\|\varphi(a)\|\leq \|a\|_{\mathcal{E}}$.
It follows from \autoref{prp:essSpComplRefl} that $X_\varphi$ is $1$-complemented in $X$, and hence $X_\varphi$ belongs to $\mathcal{E}$.
Let $\varphi_0$ denote the restriction of $\varphi$ to $X_\varphi$.
Note that $\|\varphi(b)\|=\|\varphi_0(b)\|$ for every $b\in A$.
Thus, $\varphi_0$ is a nondegenerate, contractive representation of $A$ on a space in $\mathcal{E}$.
We conclude that
\[
\|\varphi(a)\| = \|\varphi_0(a)\| \leq \|a\|_{\mathcal{E}}.\qedhere
\]
\end{proof}

Let $G$ be a locally compact group.
We equip $G$ with a fixed left invariant Haar measure, and we let $L^1(G)$ denote the corresponding group algebra of integrable functions, with product given by convolution.
Note that $L^1(G)$ is a Banach algebra with a contractive, approximate identity.

Let $X$ be a Banach space, and let $\Isom(X)$ denote the group of surjective isometric operators $X\to X$.
We equip $\Isom(X)$ with the strong operator topology.
An \emph{isometric representation} of $G$ on $X$ is a continuous group homomorphism $G\to\Isom(X)$.
Every isometric representation of $G$ on $X$ can be integrated to obtain a nondegenerate contractive representation $L^1(G)\to\Bdd(X)$.
It is well-known that this induces a bijection between isometric representations of $G$ on $X$ and nondegenerate, contractive representations $L^1(G)\to\Bdd(X)$.

Given a class $\mathcal{E}$ of Banach spaces, we denote $F_\mathcal{E}(L^1(G))$ by $F_\mathcal{E}(G)$.
The Banach algebra $F_\mathcal{E}(G)$ captures the representation theory of $G$ on Banach spaces in $\mathcal{E}$.
Given $p\in[1,\infty)$, let $L^p$ denote the class of $L^p$-spaces.
We set $F^p(G)=F_{L^p}(G)$, and we call $F^p(G)$ the \emph{universal group $L^p$-operator algebra} of $G$.
The Banach algebras $F^p(G)$ have been studied in \cite{GarThi15GpAlgLp}.

\begin{cor}
\label{prp:degReprFpG}
Let $p\in(1,\infty)$, and let $G$ be a locally compact group.
Then for each $f\in L^1(G)$, the norm $\|f\|_{L^p}$ agrees with the supremum of $\|\varphi(f)\|$ for all (not necessarily nondegenerate) contractive representations $\varphi\colon L^1(G)\to\Bdd(X)$ on some $L^p$-space $X$.
Consequently, $F^p(G)$ is universal for \emph{all} contractive representations of $L^1(G)$ on $L^p$-spaces.

Analogous statements hold for the universal group $SL^p$-operator algebra (group $QL^p$-operator algebra, group $QSL^p$-operator algebra).
\end{cor}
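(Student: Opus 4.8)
The plan is to obtain this as an immediate consequence of the theorem above, which states that if $A$ is a Banach algebra with a contractive left approximate identity and $\mathcal{E}$ is a class of reflexive Banach spaces closed under passing to $1$-complemented subspaces, then $\|a\|_{\mathcal{E}}=\sup\{\|\varphi(a)\|:\varphi\colon A\to\Bdd(X)\text{ contractive}, X\in\mathcal{E}\}$ for every $a\in A$. Thus the whole task reduces to checking that the hypotheses of that theorem hold with $A=L^1(G)$ and with $\mathcal{E}$ taken to be the class of $L^p$-spaces (and, for the parenthetical assertions, the classes of $SL^p$-, $QL^p$-, and $QSL^p$-spaces).

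First I would record that $L^1(G)$ carries a contractive two-sided approximate identity---for instance, the normalized indicator functions of a neighborhood basis of the identity of $G$---hence in particular a contractive left approximate identity, as already noted in the text. Next, since $p\in(1,\infty)$, every $L^p$-space is reflexive, and reflexivity is inherited by closed subspaces and by quotients modulo closed subspaces; therefore every space in any of the four classes $L^p$, $SL^p$, $QL^p$, $QSL^p$ is reflexive. Finally, as recalled in the discussion preceding \autoref{prp:essSpLp} (which relies on Tzafriri's theorem), each of these four classes is closed under passing to $1$-complemented subspaces. Hence the hypotheses of the theorem above are satisfied for each of these choices of $\mathcal{E}$.

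Applying that theorem with $A=L^1(G)$ and $\mathcal{E}$ the class of $L^p$-spaces yields, for every $f\in L^1(G)$,
\[
\|f\|_{L^p}=\sup\big\{\|\varphi(f)\|:\varphi\colon L^1(G)\to\Bdd(X)\text{ a contractive representation on an }L^p\text{-space }X\big\},
\]
which is the asserted norm identity; the $SL^p$-, $QL^p$-, and $QSL^p$-cases are verbatim the same. For the ``consequently'' clause, recall that $F^p(G)=F_{L^p}(L^1(G))$ is by definition the Hausdorff completion of $L^1(G)$ for the seminorm $\|\cdot\|_{L^p}$. The norm identity shows that any contractive representation $\varphi\colon L^1(G)\to\Bdd(X)$ on an $L^p$-space satisfies $\|\varphi(f)\|\leq\|f\|_{L^p}$ for all $f\in L^1(G)$, so $\varphi$ factors uniquely through a contractive representation of $F^p(G)$ on $X$; this is precisely the universal property of $F^p(G)$ relative to \emph{all} contractive representations on $L^p$-spaces, and the same goes for the $SL^p$-, $QL^p$-, and $QSL^p$-variants. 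I do not expect any genuine obstacle: everything is packaged in the theorem above and in the earlier structural results. The single point that deserves a moment's care is that the hypothesis $p>1$ (rather than $p\geq1$) is exactly what makes $L^p$-spaces reflexive and hence what allows the theorem above to be invoked; the endpoint $p=1$ is different in nature and must be treated separately, as in the $C^*$-algebra case, where it is handled by \autoref{prp:essSpReprCaLp}.
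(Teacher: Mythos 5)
Your proposal is correct and follows exactly the route the paper intends: the corollary is stated without a separate proof precisely because it is the specialization of the preceding theorem to $A=L^1(G)$ and $\mathcal{E}$ one of the four classes of $L^p$-type spaces, using that these are reflexive for $p\in(1,\infty)$ and closed under passing to $1$-complemented subspaces by Tzafriri's theorem. Your additional remarks on the universal property of $F^p(G)$ and on why $p=1$ is excluded are accurate and consistent with the paper.
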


\autoref{prp:degReprFpG} is a crucial ingredient in \cite{GarThi16arX:ReprConvLq}, where it is used to reduce the question of representability of certain Banach algebras of convolution on $L^q$-spaces to a question about \emph{nondegenerate} representations.
In particular, for every nontrivial, locally compact group $G$, it follows that $F^p(G)$ and $F^q(G)$ are not isometrically isomorphic as Banach algebras for $p,q\in[1,2]$ with $p\neq q$.


\begin{thebibliography}{ADG72}

\bibitem[ADG72]{AkeDodGam72WkCpctDualCAlg}
\bgroup\scshape{}C.~A. Akemann\egroup{}, \bgroup\scshape{}P.~G. Dodds\egroup{},
  and \bgroup\scshape{}J.~L.~B. Gamlen\egroup{}, Weak compactness in the dual
  space of a \ca{},  \emph{J. Functional Analysis} \textbf{10} (1972),
  446--450. \mr{0344898}.  \zbl{0238.46058}.

\bibitem[BS74]{BerSou74HermOpsOnBSp}
\bgroup\scshape{}E.~Berkson\egroup{} and \bgroup\scshape{}A.~Sourour\egroup{},
  The {H}ermitian operators on some {B}anach spaces,  \emph{Studia Math.}
  \textbf{52} (1974), 33--41. \mr{0355668 (50 \#8142)}.  \zbl{0258.47026}.

\bibitem[BP58]{BesPel58BasesUncondConvSeries}
\bgroup\scshape{}C.~Bessaga\egroup{} and
  \bgroup\scshape{}A.~Pe{\l}czy\'nski\egroup{}, On bases and unconditional
  convergence of series in {B}anach spaces,  \emph{Studia Math.} \textbf{17}
  (1958), 151--164. \mr{0115069}.  \zbl{0084.09805}.

\bibitem[Con90]{Con90Book:FctlAna}
\bgroup\scshape{}J.~B. Conway\egroup{}, \emph{A course in functional analysis},
  second ed., \emph{Graduate Texts in Mathematics} \textbf{96},
  Springer-Verlag, New York, 1990. \mr{1070713}.  \zbl{0706.46003}.

\bibitem[Daw10]{Daw10Multipliers}
\bgroup\scshape{}M.~Daws\egroup{}, Multipliers, self-induced and dual {B}anach
  algebras,  \emph{Dissertationes Math. (Rozprawy Mat.)} \textbf{470} (2010),
  62. \mr{2681109 (2011i:46048)}.  \zbl{1214.43004}.

\bibitem[GT15a]{GarThi15BAlgInvIsoLp}
\bgroup\scshape{}E.~Gardella\egroup{} and \bgroup\scshape{}H.~Thiel\egroup{},
  Banach algebras generated by an invertible isometry of an {$L^p$}-space,
  \emph{J. Funct. Anal.} \textbf{269} (2015), 1796--1839. \mr{3373434}.
  \zbl{1334.46040}.

\bibitem[GT15b]{GarThi15GpAlgLp}
\bgroup\scshape{}E.~Gardella\egroup{} and \bgroup\scshape{}H.~Thiel\egroup{},
  Group algebras acting on {$L^p$}-spaces,  \emph{J. Fourier Anal. Appl.}
  \textbf{21} (2015), 1310--1343. \mr{3421918}.  \zbl{1334.22007}.

\bibitem[GT16a]{GarThi16arX:PredualsBXY}
\bgroup\scshape{}E.~Gardella\egroup{} and \bgroup\scshape{}H.~Thiel\egroup{},
  Preduals and complementation of spaces of bounded linear operators, preprint
  (arXiv:1609.05326 [math.FA]), 2016.

\bibitem[GT16b]{GarThi16QuotBAlgLp}
\bgroup\scshape{}E.~Gardella\egroup{} and \bgroup\scshape{}H.~Thiel\egroup{},
  Quotients of {B}anach algebras acting on {$L^p$}-spaces,  \emph{Adv. Math.}
  \textbf{296} (2016), 85--92. \mr{3490763}.  \zbl{1341.47089}.

\bibitem[GT16c]{GarThi16arX:ReprConvLq}
\bgroup\scshape{}E.~Gardella\egroup{} and \bgroup\scshape{}H.~Thiel\egroup{},
  Representations of $p$-convolution algebras on {$L^q$}-spaces, Trans.\ Amer.\
  Math.\ Soc.\ (to appear), preprint (arXiv:1609.08612 [math.FA]), 2016.

\bibitem[GK16]{GhaKos16arX:ExtCpctOpsTrivMult}
\bgroup\scshape{}S.~Ghasemi\egroup{} and
  \bgroup\scshape{}P.~Koszmider\egroup{}, An extension of compact operators by
  compact operators with no nontrivial multipliers, preprint (arXiv:1609.04766
  [math.OA]), 2016.

\bibitem[Joh64]{Joh64IntroThyCentralizers}
\bgroup\scshape{}B.~E. Johnson\egroup{}, An introduction to the theory of
  centralizers,  \emph{Proc. London Math. Soc. (3)} \textbf{14} (1964),
  299--320. \mr{0159233}.  \zbl{0143.36102}.

\bibitem[Lam58]{Lam58IsoLp}
\bgroup\scshape{}J.~Lamperti\egroup{}, On the isometries of certain
  function-spaces,  \emph{Pacific J. Math.} \textbf{8} (1958), 459--466.
  \mr{0105017}.  \zbl{0085.09702}.

\bibitem[LT71]{LinTza71ComplSubspPbm}
\bgroup\scshape{}J.~Lindenstrauss\egroup{} and
  \bgroup\scshape{}L.~Tzafriri\egroup{}, On the complemented subspaces problem,
   \emph{Israel J. Math.} \textbf{9} (1971), 263--269. \mr{0276734 (43
  \#2474)}.  \zbl{0211.16301}.

\bibitem[NR11]{NeaRus11ExContrProjPredualJBW}
\bgroup\scshape{}M.~Neal\egroup{} and \bgroup\scshape{}B.~Russo\egroup{},
  Existence of contractive projections on preduals of {${\rm JBW}\sp
  *$}-triples,  \emph{Israel J. Math.} \textbf{182} (2011), 293--331.
  \mr{2783974}.  \zbl{1232.46063}.

\bibitem[Pal94]{Pal94BAlg1}
\bgroup\scshape{}T.~W. Palmer\egroup{}, \emph{Banach algebras and the general
  theory of {$\sp *$}-algebras. {V}ol. {I}}, \emph{Encyclopedia of Mathematics
  and its Applications} \textbf{49}, Cambridge University Press, Cambridge,
  1994, Algebras and Banach algebras. \mr{1270014 (95c:46002)}.
  \zbl{0809.46052}.

\bibitem[Pfi94]{Pfi94WkCpctDualCAlg}
\bgroup\scshape{}H.~Pfitzner\egroup{}, Weak compactness in the dual of a \ca{}
  is determined commutatively,  \emph{Math. Ann.} \textbf{298} (1994),
  349--371. \mr{1256621}.  \zbl{0791.46035}.

\bibitem[Phi12]{Phi13arX:LpAnalogsCtz}
\bgroup\scshape{}N.~C. Phillips\egroup{}, Analogs of {C}untz algebras on
  {$L^p$} spaces, preprint (arXiv:1201.4196 [math.FA]), 2012.

\bibitem[Phi13]{Phi13arX:LpCrProd}
\bgroup\scshape{}N.~C. Phillips\egroup{}, Crossed products of {$L^p$} operator
  algebras and the {$K$}-theory of {C}untz algebras on {$L^p$} spaces, preprint
  (arXiv:1309.6406 [math.FA]), 2013.

\bibitem[Spa15]{Spa15ReprCaInDualBAlg}
\bgroup\scshape{}P.~G. Spain\egroup{}, Representations of \ca{s} in dual \&
  right dual {B}anach algebras,  \emph{Houston J. Math.} \textbf{41} (2015),
  231--263. \mr{3347946}.  \zbl{06522521}.

\bibitem[Tay72]{Tay72GenPhillipsThm}
\bgroup\scshape{}D.~C. Taylor\egroup{}, A general {P}hillips theorem for
  {$C\sp{\sp{\ast} }$}-algebras and some applications,  \emph{Pacific J. Math.}
  \textbf{40} (1972), 477--488. \mr{0308799 (46 \#7913)}.  \zbl{0239.46061}.

\bibitem[Tza69]{Tza69CntrProjLp}
\bgroup\scshape{}L.~Tzafriri\egroup{}, Remarks on contractive projections in
  {$L\sb{p}$}-spaces,  \emph{Israel J. Math.} \textbf{7} (1969), 9--15.
  \mr{0248514 (40 \#1766)}.  \zbl{0184.15103}.

\bibitem[Whi66]{Whi66ProjMC0}
\bgroup\scshape{}R.~Whitley\egroup{}, Mathematical {N}otes: {P}rojecting {$m$}
  onto {$c_0$},  \emph{Amer. Math. Monthly} \textbf{73} (1966), 285--286.
  \mr{1533692}.  \zbl{0143.15301}.

\end{thebibliography}

\providecommand{\bysame}{\leavevmode\hbox to3em{\hrulefill}\thinspace}
\providecommand{\noopsort}[1]{}
\providecommand{\mr}[1]{\href{http://www.ams.org/mathscinet-getitem?mr=#1}{MR~#1}}
\providecommand{\zbl}[1]{\href{http://www.zentralblatt-math.org/zmath/en/search/?q=an:#1}{Zbl~#1}}
\providecommand{\jfm}[1]{\href{http://www.emis.de/cgi-bin/JFM-item?#1}{JFM~#1}}
\providecommand{\arxiv}[1]{\href{http://www.arxiv.org/abs/#1}{arXiv~#1}}
\providecommand{\doi}[1]{\url{http://dx.doi.org/#1}}
\providecommand{\MR}{\relax\ifhmode\unskip\space\fi MR }
\providecommand{\MRhref}[2]{%
  \href{http://www.ams.org/mathscinet-getitem?mr=#1}{#2}
}
\providecommand{\href}[2]{#2}

\end{document}